\providecommand{\U}[1]{\protect\rule{.1in}{.1in}}
\newtheorem{theorem}{Theorem}[section]
\newtheorem{corollary}[theorem]{Corollary}
\newtheorem{lemma}[theorem]{Lemma}
\newtheorem{definition}{Definition}[section]
\newtheorem{remark}{Remark}[section]
\theoremstyle{definition}
\theoremstyle{remark}
\numberwithin{equation}{section}
\let\pdfoutput=\undefined\fi
\begin{document}
\pagestyle{myheadings}

\begin{center}
{\huge \textbf{Inverse eigenproblems and approximation problems for the
generalized reflexive and antireflexive matrices with respect to a pair of
generalized reflection matrices}}\footnote{This research was supported by the
Natural Science Foundation of China (11601328), the research fund of Shanghai Lixin University of Accounting and Finance (AW-22-2201-00118).}

\bigskip

{\large \textbf{Haixia Chang }}

$^{a}$School of Statistics and Mathematics, Shanghai Lixin University of Accounting and Finance,\newline
Shanghai 201209, P.R. China\\


\bigskip
\end{center}

\begin{quotation}
\textbf{Abstract} A matrix $P$ is said to be a nontrivial generalized
reflection matrix over the real quaternion algebra $\mathbb{H}$ if $P^{\ast
}=P\neq I$ and $P^{2}=I$ where $\ast$ means conjugate and transpose. We say
that $A\in\mathbb{H}^{n\times n}$ is generalized reflexive (or generalized
antireflexive) with respect to the matrix pair $(P,Q)$ if $A=PAQ$ $($or
$A=-PAQ)$ where $P$ and $Q$ are two nontrivial generalized reflection matrices
of demension $n$. Let ${\large \varphi}$ be one of the following subsets of
$\mathbb{H}^{n\times n}$ : (i) generalized reflexive matrix; (ii)reflexive
matrix; (iii) generalized antireflexive matrix; (iiii) antireflexive matrix.
Let $Z\in\mathbb{H}^{n\times m}$ with rank$\left(  Z\right)  =m$ and
$\Lambda=$ diag$\left(  \lambda_{1},...,\lambda_{m}\right)  .$ The inverse
eigenproblem is to find a\ matrix $A$ such that the set ${\large \varphi
}\left(  Z,\Lambda\right)  =\left\{  A\in{\large \varphi}\text{ }|\text{
}AZ=Z\Lambda\right\}  $ nonempty and find the general expression of
$A.$\newline In this paper, we investigate the inverse eigenproblem
${\large \varphi}\left(  Z,\Lambda\right)  $. Moreover, the approximation
problem: $\underset{A\in{\large \varphi}}{\min\left\Vert A-E\right\Vert _{F}}$
is studied, where $E$ is a given matrix over $\mathbb{H}$\ and $\parallel
\cdot\parallel_{F}$ is the Frobenius norm. \newline\textbf{Keywords} inner
inverse of a matrix; reflexive inverse of a matrix; generalized reflexive
matrix, generalized antireflexive matrix, Frobenius norm, approximation
problem\newline\textbf{2000 AMS Subject Classifications\ }{\small 15A24,
15A33, 15A57, 15A09}\newline
\end{quotation}

\section{Introduction}

Throughout $\mathbb{H}^{m\times n}$ denotes the set of all $m\times n$
matrices over the real quaternion algebra
\[
\mathbb{H=\{}a_{0}+a_{1}i+a_{2}j+a_{3}k\text{ }|\text{ }i^{2}=j^{2}%
=k^{2}=ijk=-1\text{ and }a_{0},a_{1},a_{2},a_{3}\ \text{are real
numbers}\mathbb{\}};
\]
the symbols $I,$ $A^{\ast},$ $r(A),$ $A^{\left(  1\right)  }$ $A^{+},$
$\parallel A\parallel_{F},$ stand for the identity matrix with the appropriate
size, the conjugate transpose, the rank, the inner inverse, the reflective
inverse of a matrix $A\in\mathbb{H}^{m\times n}$ and Frobenius norm of
$A\in\mathbb{H}^{m\times n}$, respectively; define inner product,
$\left\langle A,B\right\rangle =trace\left(  B^{\ast}A\right)  $ for all $A,$
$B\in\mathbb{H}^{m\times n},$ then $\mathbb{H}^{m\times n}$ is a Hilbert inner
product space and the norm that is generated by this inner product is
Frobenius norm. The two matrices $L_{A}$ and $R_{A}$ stand for $L_{A}%
=I-A^{+}A,$ $R_{A}=I-AA^{+}$ where $A^{+}$ is an any but fixed reflexive
inverse of $A.$ $V_{n}$ denotes the $n\times n$ backward identity matrix whose
elements along the southwest-northeast diagonal are ones and whose remaining
elements are zeros$.$

The real quaternion matrices play an important role in computer science,
quantum physicis, and so on (\emph{ }\cite{shuai}-\cite{sla}). The more
interests of quaternion matrices have been witnessed recently (e.g.,\emph{
}\cite{ZFZHEN}, \cite{DMV}). As $\mathbb{H}$ is noncommutative for its
mutiplication, i.e., $ab\neq ba,$ for $a,b\in\mathbb{H}$ in general, it bring
some obstacles in the study of quatenion matrices. Many authors has used the
method of embeddings of the quaternions into complex vector spaces, through
symplectic representations, such as D.R. Farenick and B. A.F. Pidkowich in
\cite{Farenick2003} and homotopy theory, such as Zhang in \cite{ZFZHEN}%
\emph{.}

Since left and right scalar multiplications are different, it force us to
consider $Ax=\lambda x,$ and $Ax=x\lambda$ separately. A quaternion $\lambda$
is said to a left (right) eigenvalue if $Ax=\lambda x$ $\left(  Ax=x\lambda
\right)  .$ In 1985, Wood proposed that every $n\times n$ quaternion matrix
has at least one left eigenvalue in $\mathbb{H}$ and Zhang in \cite{ZFZHEN}%
\emph{ }give its proof with the homotopy theory method. But the problem that
how many left eigenvalues a square quaternion matrix has has not been solved
yet. While the right eigenvalues of a square quaternion matrix has been well
studied, Zhang in \cite{ZFZHEN}\emph{ }explicitly refer to any $n\times n$
quaternion matrix $A$ has exactly $n$ right eigenvalues which are complex
numbers with nonnegative imaginary parts (standard eigenvalues of $A$).
Weighing the above cases, we only investigate the right inverse eigenproblems
of a square quaternion matrix in this paper.

Reflexive and antireflexive matrices with respect to a generalized reflection
matrix have been widely studied, e.g. \cite{hu2003}-\cite{Trench3}. But the
generalized reflexive and generalized antireflexive matrices with respect to
the generalized reflection matrix pair have not been widely studied yet.

In 1998, Chen in \cite{chen98} defined that $A\in\mathbb{H}^{m\times n}$ is
generalized reflexive (or generalized antireflexive) and give two special
subsets of the space $\mathbb{C}^{m\times n}$. The application of the
generalized reflexive (or generalized antireflexive) matrix is very wide in
engineering and science, such as the altitude estimation of a level network,
electric network and structural analysis of trusses and so on, see
\cite{chen98}.

\begin{definition}
A matrix $A\in\mathbb{H}^{n\times n}$ is generalized reflexive with respect to
the matrix pair $(P,Q)$ if $A=PAQ$ and $A\in\mathbb{H}^{n\times n}$ is
generalized antireflexive with respect to the matrix pair $(P,Q)$ if
$A=-PAQ,$where $P$ and $Q$ are two nontrivial generalized reflection matrices
of demension $n$, i.e., $P^{\ast}=P\neq I$, $P^{2}=I,$ and $Q^{\ast}=Q\neq I$,
$Q^{2}=I$.
\end{definition}

The generalized reflexive and the generalized antireflexive matrix are further
generalization of reflexive and antireflexive, respectively.\ From this
definition, it is clear that $A$ is a reflexive martix if generalized
reflection matrices $P=Q,$ i.e., $A=PAP$ and a centrosymmetric matrix if
$P=Q=V_{n},$ i.e., $A=V_{n}AV_{n}.$ Centrosymmetric and centroskew matrices
was investigated by many authors ,such as Andrew \cite{Andrew1},\cite{Andrew2}%
, Weaver \cite{Weaver}, Boullion and Atchison \cite{W.C.1}, Wang
\cite{wqw2005}, and the others.

Define two special subspaces of $\mathbb{H}^{n\times n}$
\begin{align*}
\mathbb{H}_{r}^{n\times n}\left(  P,Q\right)   &  =\left\{  A\text{ }|\text{
}A\in\mathbb{H}^{n\times n}\text{ and }A=PAQ\right\}  ,\\
\mathbb{H}_{a}^{n\times n}\left(  P,Q\right)   &  =\left\{  A\text{ }|\text{
}A\in\mathbb{H}^{n\times n}\text{ and }A=-PAQ\right\}  ,
\end{align*}
where $P$ and $Q$ are two generalized reflection matrices of dimension $m$ and
$n,$ respectively. If $A\in\mathbb{H}_{r}^{n\times n}\left(  P,Q\right)  $
$\left(  \in\mathbb{H}_{a}^{n\times n}\left(  P,Q\right)  \right)  ,$ then $A$
is a generalized reflexive (generalized antireflexive) matrix with respect to
nontrivial generalized reflection matrix pair $\left(  P,Q\right)  .$

The inverse eigenproblems of quaternion matrices: given matrices
$Z\in\mathbb{H}^{n\times m}$ and $\Lambda=$diag$\left(  \lambda_{1}%
,...,\lambda_{m}\right)  ,$ find $A\in\mathbb{H}^{n\times n}$ satisfying
$AZ=Z\Lambda,$ where $\lambda_{1},...,\lambda_{m}$ are complex numbers with
nonnegative imaginary parts. The inverse problems and inverse eigenproblems of
matrices has been wide topics, (e.g., \cite{Trench2}, \cite{bai}-\cite{XIE})
with the method involing Moore-Penrose inverse or singular value
decompositions of matrices related to $\left(  Z,\Lambda\right)  ,$ respectively.

In this paper, we investigate the inverse (right) eigenproblem of $\left(
Z,\Lambda\right)  ,$ find $A\in\mathbb{H}_{r}^{n\times n}\left(  P,Q\right)  $
$\left(  \in\mathbb{H}_{a}^{n\times n}\left(  P,Q\right)  \right)  $ and
present the general expression of $A.$ We make use of the structure properties
of $A\in\mathbb{H}_{r}^{n\times n}\left(  P,Q\right)  $ $\left(  \in
\mathbb{H}_{a}^{n\times n}\left(  P,Q\right)  \right)  $ to deal with the
inverse eigenproblem. Moreover, the approximation problem: $\underset
{A\in{\large \varphi}}{\min\left\Vert A-E\right\Vert _{F}},$ where $E$ is a
given matrix over $\mathbb{H}$ and\ ${\large \varphi}$ is one of
$\mathbb{H}_{r}^{n\times n}\left(  P,Q\right)  $ and $\mathbb{H}_{a}^{n\times
n}\left(  P,Q\right)  $. The explicit expression of $A$ is also presented.
Futhermore, it is pointed that some results in recent papers are special cases
of this paper.

\section{The solution of the inverse eigenproblems of $\left(  Z,\Lambda
\right)  $}

In this section, we first give a structure property of generalized reflexive
matrix and a generalized antireflexive matrix. Then we make use of the
structure property to derive the general expression of $A\in\mathbb{H}%
_{r}^{n\times n}\left(  P,Q\right)  $ $\left(  \in\mathbb{H}_{a}^{n\times
n}\left(  P,Q\right)  \right)  $ satisfying $AZ=Z\Lambda.$

By Proposition 2.1 in (\cite{Farenick2003}), we know that $R\in\mathbb{H}%
^{n\times n}$ has infinitely many nonreal right eigenvalues if $R$ has a
nonreal right eigenvalue. Because if $\left(  \lambda,\xi\right)  $ is a right
eigenvalue, then $\left(  w^{-1}\lambda w,\xi w\right)  $ is also a right
eigenpair of $R$. Then we introduce the orbit $\theta(\lambda),$ i.e., a class
from a partition of $\mathbb{H}$. For example, a similarity orbit
$\theta(\lambda)$ of $\lambda$:
\[
\theta(\lambda)=\left\{  w^{-1}\lambda w\text{ }|\text{ }w\in\mathbb{H}\text{,
}w\neq0\right\}
\]
and a conjugacy class $\theta(\lambda)$ of $\lambda$:%
\[
\theta(\lambda)=\left\{  \bar{w}\lambda w\text{ }|\text{ }w\in\mathbb{H}%
\text{, }\left\vert w\right\vert =1\right\}  .
\]

\begin{lemma}
$($See Theorem 3.3 in \cite{Farenick2003}$)$ If $R\in\mathbb{H}^{n\times n}$
is normal, then there are matrices $D,$ $U\in\mathbb{H}^{n\times n}$ such that
: \newline$\left(  1\right)  $ $U$ is unitary matrix, $D$ is a diagonal
matrix, and $U^{\ast}RU=D;$\newline$\left(  2\right)  $ each diagonal entry of
$D$ is a complex number contained in the closed upper halfplane $\mathbf{C}%
^{+}.$\newline$\left(  3\right)  $ $q\in\mathbb{H}$ is a right eigenvalue of
$R$ if and only if $q\in\theta(\lambda)$ for some diagonal element $\lambda$
of $D.$
\end{lemma}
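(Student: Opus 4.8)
The plan is to reduce the statement to the classical spectral theorem for normal complex matrices via the standard complex (symplectic) representation of $\mathbb{H}^{n\times n}$, the same device used in \cite{Farenick2003}. Write each $A\in\mathbb{H}^{n\times n}$ uniquely as $A=A_{1}+A_{2}j$ with $A_{1},A_{2}\in\mathbb{C}^{n\times n}$ and set
\[
\chi(A)=\begin{pmatrix} A_{1} & A_{2}\\ -\overline{A_{2}} & \overline{A_{1}} \end{pmatrix}\in\mathbb{C}^{2n\times 2n}.
\]
First I would record the routine facts that $\chi$ is an injective $\mathbb{R}$-algebra homomorphism with $\chi(I_{n})=I_{2n}$ and $\chi(A^{\ast})=\chi(A)^{\ast}$, and that its image is exactly the set of $M\in\mathbb{C}^{2n\times2n}$ with $\overline{M}=JMJ^{-1}$, where $J=\begin{pmatrix}0 & I_{n}\\ -I_{n} & 0\end{pmatrix}$ and $J^{2}=-I_{2n}$. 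In particular, $R$ normal forces $\chi(R)$ to be a normal complex matrix.

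Next, apply the complex spectral theorem to obtain a unitary $W\in\mathbb{C}^{2n\times 2n}$ and a diagonal $\Delta$ with $W^{\ast}\chi(R)W=\Delta$. The structural identity $\overline{\chi(R)}=J\chi(R)J^{-1}$ shows that the conjugate-linear map $\iota\colon v\mapsto J\overline{v}$ carries the $\mu$-eigenspace of $\chi(R)$ onto the $\overline{\mu}$-eigenspace and satisfies $\iota^{2}=-\mathrm{id}$. Hence nonreal eigenvalues occur in conjugate pairs $\{\mu,\overline{\mu}\}$ with eigenspaces of equal dimension, and for real eigenvalues the corresponding eigenspace is $\iota$-stable, even-dimensional, and carries a quaternionic structure. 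Using Gram--Schmidt adapted to $\iota$ one builds, within the eigenspaces of $\chi(R)$, an orthonormal eigenbasis that is (up to unimodular scalars) permuted by $\iota$; reordering the basis vectors so that the pair $(v,\iota v)$ is placed in positions $(\ell,n+\ell)$, the resulting unitary $W$ lies in the image of $\chi$, say $W=\chi(U)$ with $U$ quaternionic unitary, and $\Delta=\chi(D)$ with $D=\mathrm{diag}(\lambda_{1},\dots,\lambda_{n})$ after choosing in each conjugate pair the representative $\lambda_{\ell}\in\mathbf{C}^{+}$. Applying $\chi^{-1}$ to $W^{\ast}\chi(R)W=\Delta$ gives $U^{\ast}RU=D$, which is parts $(1)$ and $(2)$.

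For part $(3)$, I would use that the set of right eigenvalues of $R$ is a union of similarity orbits: if $Rx=x\lambda$ with $x\neq0$, then $R(xw)=(xw)(w^{-1}\lambda w)$ for every nonzero $w\in\mathbb{H}$, so $\theta(\lambda)$ consists entirely of right eigenvalues. Since the columns $u_{\ell}$ of $U$ satisfy $Ru_{\ell}=u_{\ell}\lambda_{\ell}$, each $q\in\theta(\lambda_{\ell})$ is a right eigenvalue. Conversely, suppose $Rx=x q$ with $x\neq0$. Replacing $x$ by $xw$ for a suitable unit $w$ we may assume $q$ is the complex number in $\mathbf{C}^{+}$ of its conjugacy class; writing $x=Uy$ gives $Dy=U^{\ast}RUy=yq$, so $\lambda_{\ell}y_{\ell}=y_{\ell}q$ for all $\ell$. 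Choosing $\ell$ with $y_{\ell}\neq0$ yields $q=y_{\ell}^{-1}\lambda_{\ell}y_{\ell}\in\theta(\lambda_{\ell})$, hence the original $q\in\theta(\lambda_{\ell})$. The main obstacle is the middle step: reorganizing the complex orthonormal eigenbasis of $\chi(R)$ into one that is compatible with $\iota$, in particular handling eigenvalues of multiplicity greater than one and real eigenvalues (where one must invoke that a finite-dimensional complex inner product space equipped with a conjugate-linear isometry of square $-\mathrm{id}$ admits a $\iota$-adapted orthonormal basis, i.e.\ a quaternionic orthonormal basis). Everything else is bookkeeping with the homomorphism $\chi$.
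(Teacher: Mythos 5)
The paper offers no proof of this lemma at all: it is imported verbatim as Theorem 3.3 of \cite{Farenick2003}, so there is no internal argument to compare yours against. Judged on its own, your proof via the complex representation $\chi$ is the standard route and is essentially correct: $\chi$ preserves normality, the identity $\chi(R)J=J\overline{\chi(R)}$ makes $\iota\colon v\mapsto J\overline{v}$ a conjugate-linear isometry with $\iota^{2}=-\mathrm{id}$ carrying the $\mu$-eigenspace of $\chi(R)$ onto the $\overline{\mu}$-eigenspace, and descending the complex unitary diagonalization to $\mathbb{H}$ reduces exactly to the quaternionic-basis lemma you flag (for nonreal $\mu$ one simply transports an orthonormal basis of the $\mu$-eigenspace by $\iota$ to serve as the basis of the $\overline{\mu}$-eigenspace; only real eigenvalues genuinely need the lemma, and there the statement you isolate --- a conjugate-linear isometry of square $-\mathrm{id}$ admits an adapted orthonormal basis --- is standard). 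Two points to tighten: with your normalization of $\chi$ and $J$, the $(n+\ell)$-th column of a matrix in the image of $\chi$ equals $-\iota$ applied to the $\ell$-th column, not $\iota$ of it, so the ``up to unimodular scalars'' caveat should be made explicit when you reorder the eigenbasis and claim $W=\chi(U)$; and in part $(3)$ you should record the fact that every quaternion is similar to a unique complex number in $\mathbf{C}^{+}$ (equivalently, that two quaternions are similar iff they share real part and modulus), since that is what licenses ``replacing $x$ by $xw$.'' Both are routine, not gaps. Your argument for part $(3)$, including the observation that the right spectrum is a union of similarity orbits, is the same observation the paper itself makes informally just before stating the lemma.
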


\begin{lemma}
Suppose that $P\in\mathbb{H}^{n\times n}$ is a nontrivial generalized
reflection matrix; then there exists a unitary matrix $U\in\mathbb{H}^{n\times
n}$ such that
\begin{equation}
P=U\left[
\begin{array}
[c]{cc}%
I_{r_{1}} & 0\\
0 & -I_{n-r_{1}}%
\end{array}
\right]  U^{\ast}. \label{aa1}%
\end{equation}

\end{lemma}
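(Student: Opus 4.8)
The plan is to apply Lemma 2.2 to the matrix $P$ itself. First I would observe that $P$ is normal: since $P^{\ast}=P$, we trivially have $P^{\ast}P=P^{2}=PP^{\ast}$, so $P$ is (Hermitian, hence) normal in $\mathbb{H}^{n\times n}$. By Lemma 2.2 there exist a unitary $U\in\mathbb{H}^{n\times n}$ and a diagonal $D\in\mathbb{H}^{n\times n}$ with $U^{\ast}PU=D$, each diagonal entry of $D$ lying in the closed upper half-plane $\mathbf{C}^{+}$.

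Next I would pin down the diagonal entries. From $P^{2}=I$ we get $D^{2}=U^{\ast}P^{2}U=I$, so every diagonal entry $d$ of $D$ satisfies $d^{2}=1$; as $d$ is a complex number this forces $d=\pm 1$. The value $d=0$ is excluded, and $P\neq I$ rules out the case where every entry equals $1$, while $P\neq -I$ is not needed (though it would rule out the all-$-1$ case; in any event the statement only claims existence of a decomposition of the stated block shape, which persists even in degenerate mixes). After grouping the $+1$ eigenvalues together and the $-1$ eigenvalues together — which amounts to post-multiplying $U$ by a permutation matrix, still unitary — we obtain
\begin{equation*}
U^{\ast}PU=\left[
\begin{array}{cc}
I_{r_{1}} & 0\\
0 & -I_{n-r_{1}}
\end{array}
\right],
\end{equation*}
where $r_{1}$ is the number of $+1$ entries. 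Rearranging gives \eqref{aa1}. Since $P\neq I$ we have $n-r_{1}\geq 1$, so the decomposition is genuinely nontrivial.

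The only mild subtlety — and the one place I would be careful — is the reordering step: in the quaternion setting one must check that conjugating $D$ by a permutation matrix $S$ (so that $P=US(\text{block form})(US)^{\ast}$) is legitimate, i.e.\ that $US$ is again unitary and that $S^{\ast}DS$ merely permutes the diagonal entries. Both facts hold because permutation matrices have real entries and $S^{\ast}=S^{-1}=S^{\top}$, so the argument goes through verbatim as over $\mathbb{C}$. Apart from this bookkeeping, the proof is an immediate corollary of Lemma 2.2 together with the algebraic constraints $P^{\ast}=P$, $P^{2}=I$, $P\neq I$.
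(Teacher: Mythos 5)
Your proposal follows essentially the same route as the paper: observe that $P^{\ast}=P$ and $P^{2}=I$ make $P$ normal, invoke the quaternionic spectral theorem (the paper's Lemma 2.1, from Farenick--Pidkowich) to diagonalize $P$ unitarily with complex diagonal entries in $\mathbf{C}^{+}$, and deduce from $P^{2}=I$ that these entries are $\pm 1$ (the paper phrases this via the minimal polynomial). Your additional care about the permutation reordering and the degenerate case $r_{1}\in\{0,n\}$ is sound and slightly more explicit than the paper's one-line conclusion, but the argument is the same.
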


\begin{proof}
A matrix $P\in\mathbb{H}^{n\times n}$ is a nontrivial generalized reflection
matrix, i.e., $P^{\ast}=P\neq I$, $P^{2}=I.$ Obviously, $P$ is a normal
matrix. According to the minimal polynomial of $P,$ we know 1, -1 are all
right engivalues (or standard eigenvalues ) of $P.$ By Lemma 2.1, there exists
a unitary matrix $U$ satisfying (\ref{aa1}).
\end{proof}

If a matrix $Q\in\mathbb{H}^{n\times n}$ is a nontrivial generalized
reflective matrix, by Lemma 2.2, we get
\begin{equation}
Q=V\left[
\begin{array}
[c]{cc}%
I_{r_{2}} & 0\\
0 & -I_{n-r_{2}}%
\end{array}
\right]  V^{\ast}, \label{ab1}%
\end{equation}
where $V$ is a unitary matrix.

We can use a staightforward method to obtain $U$ and $V$ mentioned above. In
fact, by $P^{2}=I,$ i.e., $\left(  I-P\right)  \left(  I+P\right)  =0=0$ or
$\left(  I+P\right)  \left(  I-P\right)  =0,$ we know the column vectors of
$I+P$ and $I-P$ are the eigenvector belonging to the right eigenvalues
$\lambda=1,$ $\lambda=-1$ of the matrix $P,$ respectively. We can only apply
the Gram-Schmidt process to the columns of $I+P$ and $I-P,$respectively. Let
\begin{align*}
&  I+P\text{ }\underrightarrow{Gram-Schmidt}\text{ }U_{1}\\
&  I-P\text{ }\underrightarrow{Gram-Schmidt}\text{ }U_{2}%
\end{align*}
and in (\ref{aa1})%
\begin{equation}
U=\left[
\begin{array}
[c]{cc}%
U_{1}, & U_{2}%
\end{array}
\right]  \label{A1}%
\end{equation}
where $U_{1}^{\ast}U_{1}=I_{r_{1}},$ $U_{2}^{\ast}U_{2}=I_{m-r_{1}},$
$U_{1}^{\ast}U_{2}=0.$ Similarily, we can get the factorizaton of $V$ in
(\ref{ab1})%
\begin{equation}
V=\left[
\begin{array}
[c]{cc}%
V_{1}, & V_{2}%
\end{array}
\right]  \label{A2}%
\end{equation}
where $V_{1}^{\ast}V_{1}=I_{r_{2}},$ $V_{2}^{\ast}V_{2}=I_{n-r_{2}},$
$V_{1}^{\ast}V_{2}=0.$

By (\ref{aa1})-(\ref{A2}), we obtain
\begin{align}
P  &  =\left[
\begin{array}
[c]{cc}%
U_{1}, & U_{2}%
\end{array}
\right]  \left[
\begin{array}
[c]{cc}%
I_{r_{1}} & 0\\
0 & -I_{n-r_{1}}%
\end{array}
\right]  \left[
\begin{array}
[c]{c}%
U_{1}^{\ast}\\
U_{2}^{\ast}%
\end{array}
\right]  ,\label{A3}\\
Q  &  =\left[
\begin{array}
[c]{cc}%
V_{1}, & V_{2}%
\end{array}
\right]  \left[
\begin{array}
[c]{cc}%
I_{r_{2}} & 0\\
0 & -I_{n-r_{2}}%
\end{array}
\right]  \left[
\begin{array}
[c]{c}%
V_{1}^{\ast}\\
V_{2}^{\ast}%
\end{array}
\right]  , \label{A4}%
\end{align}
where
\begin{equation}
U_{1}^{\ast}U_{1}=I_{r_{1}},U_{2}^{\ast}U_{2}=I_{m-r_{1}},U_{1}^{\ast}U_{2}=0
\label{A5}%
\end{equation}
and
\begin{equation}
V_{1}^{\ast}V_{1}=I_{r_{2}},V_{2}^{\ast}V_{2}=I_{n-r_{2}},V_{1}^{\ast}V_{2}=0.
\label{A6}%
\end{equation}

\begin{theorem}
A matrix $A\in\mathbb{H}^{n\times n}$ is generalized reflexive matrix with
respect to the nontrivial generalized reflection matrix pair $(P,Q)$ if and
only if $A$ can be expressed
\begin{equation}
A=\left[
\begin{array}
[c]{cc}%
U_{1}, & U_{2}%
\end{array}
\right]  \left[
\begin{array}
[c]{cc}%
A_{11} & 0\\
0 & A_{22}%
\end{array}
\right]  \left[
\begin{array}
[c]{c}%
V_{1}^{\ast}\\
V_{2}^{\ast}%
\end{array}
\right],  \label{aa2}%
\end{equation}
where
\begin{equation}
A_{11}=U_{1}^{\ast}AV_{1}\text{, }A_{22}=U_{2}^{\ast}AV_{2} \label{AS1}%
\end{equation}
and $U_{1},$ $U_{2},$ $V_{1},$ $V_{2}\ $are defined as (\ref{A1}) and
(\ref{A2}).
\end{theorem}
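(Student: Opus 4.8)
The plan is to prove both directions by a direct computation with the block factorizations (\ref{A3}) and (\ref{A4}) of $P$ and $Q$. First I would fix notation: set $J_1=\mathrm{diag}(I_{r_1},-I_{n-r_1})$ and $J_2=\mathrm{diag}(I_{r_2},-I_{n-r_2})$, so that $P=UJ_1U^\ast$ and $Q=VJ_2V^\ast$ with $U=[U_1,\ U_2]$ and $V=[V_1,\ V_2]$, and note $J_i^\ast=J_i$ and $J_i^2=I$. The decisive preliminary remark is that $U$ and $V$ are unitary: the relations (\ref{A5}) say exactly that $U^\ast U=I_n$, and since $U$ is square this also yields $UU^\ast=I_n$ (equivalently $U_1U_1^\ast+U_2U_2^\ast=I_n$); the same holds for $V$ by (\ref{A6}). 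I would also record that, because $\mathbb{H}$ is noncommutative, the order in every matrix product must be preserved --- but all the cancellations below only multiply by the sign matrices $J_1,J_2$, whose blocks are $\pm I$, so noncommutativity never actually enters.

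For the ``if'' direction, suppose $A$ is given by (\ref{aa2}), i.e. $A=UDV^\ast$ with $D=\mathrm{diag}(A_{11},A_{22})$. Then
\begin{align*}
PAQ &=(UJ_1U^\ast)(UDV^\ast)(VJ_2V^\ast)\\
&=U\,J_1\,(U^\ast U)\,D\,(V^\ast V)\,J_2\,V^\ast=U\,(J_1DJ_2)\,V^\ast ,
\end{align*}
using $U^\ast U=V^\ast V=I_n$. Since $J_1DJ_2=\mathrm{diag}(I_{r_1}A_{11}I_{r_2},\ (-I)A_{22}(-I))=\mathrm{diag}(A_{11},A_{22})=D$, we obtain $PAQ=UDV^\ast=A$, so $A$ is generalized reflexive with respect to $(P,Q)$.

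For the ``only if'' direction, suppose $A=PAQ$. Using unitarity of $U$ and $V$ I expand $A=(UU^\ast)A(VV^\ast)=U\widehat A V^\ast$, where $\widehat A:=U^\ast A V$ is the $2\times 2$ block matrix whose $(i,j)$ block is $U_i^\ast A V_j$. On the other hand, substituting $A=PAQ=(UJ_1U^\ast)A(VJ_2V^\ast)=U(J_1\widehat A J_2)V^\ast$ and cancelling the unitary factors $U$ on the left and $V^\ast$ on the right (i.e. multiplying by $U^\ast$ and by $V$) gives $\widehat A=J_1\widehat A J_2$. Blockwise this reads $U_1^\ast A V_2=-U_1^\ast A V_2$ and $U_2^\ast A V_1=-U_2^\ast A V_1$, so the two off-diagonal blocks vanish, since $2X=0$ forces $X=0$ over $\mathbb{H}$. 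Hence $\widehat A=\mathrm{diag}(A_{11},A_{22})$ with $A_{11}=U_1^\ast A V_1$ and $A_{22}=U_2^\ast A V_2$, which is precisely (\ref{aa2}) together with (\ref{AS1}).

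I do not expect a genuine obstacle: the argument is bookkeeping with unitary block matrices. The only points needing care are confirming both one-sided identities $U^\ast U=I_n$ and $UU^\ast=I_n$ (and likewise for $V$) from the Gram--Schmidt relations (\ref{A5})--(\ref{A6}), since these justify the cancellations, and keeping the left/right multiplication order straight so that the noncommutativity of $\mathbb{H}$ is never invoked. The generalized antireflexive case follows from the identical computation after replacing the sign matrix of $Q$ by its negative, which turns $\widehat A=J_1\widehat A J_2$ into $\widehat A=-J_1\widehat A J_2$ and annihilates the diagonal blocks instead.
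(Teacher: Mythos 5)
Your proof is correct and follows essentially the same route as the paper: conjugate $A$ by the unitary matrices $U$ and $V$ that block-diagonalize $P$ and $Q$, observe that $A=PAQ$ forces the off-diagonal blocks of $U^{\ast}AV$ to satisfy $X=-X$ and hence vanish, and verify the converse by direct substitution. The only difference is cosmetic --- you spell out the ``if'' direction and the unitarity of $U,V$ more explicitly than the paper, which simply asserts the converse is easy to verify.
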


\begin{proof}
By the definition of $A\in\mathbb{H}_{r}^{n\times n}\left(  P,Q\right)  ,$
(\ref{A3}) and (\ref{A4}), we have
\begin{equation}
A=PAQ=\left[
\begin{array}
[c]{cc}%
U_{1}, & U_{2}%
\end{array}
\right]  \left[
\begin{array}
[c]{cc}%
I_{r_{1}} & 0\\
0 & -I_{n-r_{1}}%
\end{array}
\right]  \left[
\begin{array}
[c]{c}%
U_{1}^{\ast}\\
U_{2}^{\ast}%
\end{array}
\right]  A\left[
\begin{array}
[c]{cc}%
V_{1}, & V_{2}%
\end{array}
\right]  \left[
\begin{array}
[c]{cc}%
I_{r_{2}} & 0\\
0 & -I_{n-r_{2}}%
\end{array}
\right]  \left[
\begin{array}
[c]{c}%
V_{1}^{\ast}\\
V_{2}^{\ast}%
\end{array}
\right]  , \label{ab3}%
\end{equation}
i.e.,
\begin{equation}
\left[
\begin{array}
[c]{c}%
U_{1}^{\ast}\\
U_{2}^{\ast}%
\end{array}
\right]  A\left[
\begin{array}
[c]{cc}%
V_{1}, & V_{2}%
\end{array}
\right]  =\left[
\begin{array}
[c]{cc}%
I_{r_{1}} & 0\\
0 & -I_{n-r_{1}}%
\end{array}
\right]  \left[
\begin{array}
[c]{c}%
U_{1}^{\ast}\\
U_{2}^{\ast}%
\end{array}
\right]  A\left[
\begin{array}
[c]{cc}%
V_{1}, & V_{2}%
\end{array}
\right]  \left[
\begin{array}
[c]{cc}%
I_{r_{2}} & 0\\
0 & -I_{n-r_{2}}%
\end{array}
\right]  \label{ab2}%
\end{equation}
Put
\begin{equation}
\left[
\begin{array}
[c]{c}%
U_{1}^{\ast}\\
U_{2}^{\ast}%
\end{array}
\right]  A\left[
\begin{array}
[c]{cc}%
V_{1}, & V_{2}%
\end{array}
\right]  =\left[
\begin{array}
[c]{cc}%
A_{11} & A_{12}\\
A_{21} & A_{22}%
\end{array}
\right]  , \label{aa3}%
\end{equation}
where $A_{11}\in\mathbb{H}^{r_{1}\times r_{2}},$ $A_{12}\in\mathbb{H}%
^{r_{1}\times\left(  n-r_{2}\right)  },$ $A_{21}\in\mathbb{H}^{\left(
n-r_{1}\right)  \times r_{2}},$ $A_{22}\in\mathbb{H}^{\left(  n-r_{1}\right)
\times\left(  n-r_{2}\right)  }.$ Substituting (\ref{aa3}) into (\ref{ab2}),
we have
\[
\left[
\begin{array}
[c]{cc}%
A_{11} & A_{12}\\
A_{21} & A_{22}%
\end{array}
\right]  =\left[
\begin{array}
[c]{cc}%
I_{r_{1}} & 0\\
0 & -I_{n-r_{1}}%
\end{array}
\right]  \left[
\begin{array}
[c]{cc}%
A_{11} & A_{12}\\
A_{21} & A_{22}%
\end{array}
\right]  \left[
\begin{array}
[c]{cc}%
I_{r_{2}} & 0\\
0 & -I_{n-r_{2}}%
\end{array}
\right]  =\left[
\begin{array}
[c]{cc}%
A_{11} & -A_{12}\\
-A_{21} & A_{22}%
\end{array}
\right]  ,
\]
yielding $A_{12}=0$, $A_{21}=0.$ (\ref{ab3}) becomes (\ref{aa2}). By
(\ref{aa2}), (\ref{A5}) and (\ref{A6}), it is easy to get (\ref{AS1}).

Conversly, if (\ref{aa2}) holds it is easy to verify $A$ is generalized
reflexive matrix with respect to the nontrivial generalized reflection matrix
pair $(P,Q).$
\end{proof}

Similarly, we can obtain the factorization of generalized antireflexive matrix
with respect to the generalized reflection matrix pair $(P,Q).$ We have the
following Theorem.

\begin{theorem}
A matrix $A\in\mathbb{H}_{a}^{n\times n}\left(  P,Q\right)  $ if and only if
$A$ can be expressed as
\[
A=\left[
\begin{array}
[c]{cc}%
U_{1}, & U_{2}%
\end{array}
\right]  \left[
\begin{array}
[c]{cc}%
0 & A_{12}\\
A_{21} & 0
\end{array}
\right]  \left[
\begin{array}
[c]{c}%
V_{1}^{\ast}\\
V_{2}^{\ast}%
\end{array}
\right]  ,
\]
where
\[
A_{12}=U_{1}^{\ast}AV_{2}\text{, \ }A_{21}=U_{2}^{\ast}AV_{1},%
\]
and $U_{1},$ $U_{2},$ $V_{1},$ $V_{2}\ $are defined as (\ref{A1}) and
(\ref{A2}).
\end{theorem}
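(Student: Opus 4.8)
The plan is to mimic, essentially verbatim, the proof of Theorem~2.3, since the structure of a generalized antireflexive matrix differs from that of a generalized reflexive matrix only by a sign in the defining relation. First I would invoke the factorizations (\ref{A3}) and (\ref{A4}) of the nontrivial generalized reflection matrices $P$ and $Q$, together with the orthonormality relations (\ref{A5}) and (\ref{A6}), and substitute them into the defining identity $A=-PAQ$. This produces, exactly as in (\ref{ab3})--(\ref{ab2}), the relation
\begin{equation}
\left[
\begin{array}
[c]{c}
U_{1}^{\ast}\\
U_{2}^{\ast}
\end{array}
\right]  A\left[
\begin{array}
[c]{cc}
V_{1}, & V_{2}
\end{array}
\right]  =-\left[
\begin{array}
[c]{cc}
I_{r_{1}} & 0\\
0 & -I_{n-r_{1}}
\end{array}
\right]  \left[
\begin{array}
[c]{c}
U_{1}^{\ast}\\
U_{2}^{\ast}
\end{array}
\right]  A\left[
\begin{array}
[c]{cc}
V_{1}, & V_{2}
\end{array}
\right]  \left[
\begin{array}
[c]{cc}
I_{r_{2}} & 0\\
0 & -I_{n-r_{2}}
\end{array}
\right]  . \nonumber
\end{equation}

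Next I would write the $2\times2$ block decomposition of $U^{\ast}AV$ as in (\ref{aa3}), with blocks $A_{11}, A_{12}, A_{21}, A_{22}$ of the same sizes, and carry out the block multiplication on the right-hand side. The diagonal sign pattern $\mathrm{diag}(I_{r_1}, -I_{n-r_1})$ on the left and $\mathrm{diag}(I_{r_2}, -I_{n-r_2})$ on the right, together with the overall minus sign, sends the block matrix $\bigl[\begin{smallmatrix}A_{11}&A_{12}\\A_{21}&A_{22}\end{smallmatrix}\bigr]$ to $\bigl[\begin{smallmatrix}-A_{11}&A_{12}\\A_{21}&-A_{22}\end{smallmatrix}\bigr]$. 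Equating the two sides forces $A_{11}=0$ and $A_{22}=0$, which yields the claimed block form. Reassembling via $A=U(U^{\ast}AV)V^{\ast}$ gives the displayed expression, and the formulas $A_{12}=U_{1}^{\ast}AV_{2}$, $A_{21}=U_{2}^{\ast}AV_{1}$ follow immediately by multiplying that expression on the left by $U_i^{\ast}$ and on the right by $V_j$ and using (\ref{A5})--(\ref{A6}).

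For the converse, I would simply take a matrix $A$ of the stated block form and verify $-PAQ=A$ by direct substitution: the factor $\mathrm{diag}(I_{r_1},-I_{n-r_1})$ absorbed into the off-diagonal block matrix flips the sign of the block-rows in a way that is exactly cancelled by the sign flip of the block-columns coming from $\mathrm{diag}(I_{r_2},-I_{n-r_2})$ up to the global minus sign, and the orthonormality relations collapse the products $U^{\ast}U$ and $V^{\ast}V$ back to identities. This is a routine computation identical in spirit to the one ending the proof of Theorem~2.3.

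There is no real obstacle here; the only point requiring a little care is bookkeeping the sign pattern correctly for the off-diagonal rather than the diagonal blocks — that is, checking that it is $A_{11}$ and $A_{22}$ (not $A_{12}$ and $A_{21}$) that are annihilated. Since the argument is an immediate adaptation of Theorem~2.3, I would present it tersely, writing only "Similarly to the proof of Theorem~2.3, replacing $A=PAQ$ by $A=-PAQ$, we obtain $A_{11}=0$ and $A_{22}=0$; the converse is verified by direct substitution as before."
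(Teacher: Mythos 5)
Your proof is correct and follows exactly the route the paper intends: the paper omits the proof of this theorem, stating only that it is obtained ``similarly'' to Theorem~2.3, and your adaptation (substituting $A=-PAQ$ into the block decomposition and observing that the overall minus sign now annihilates the diagonal blocks $A_{11}$ and $A_{22}$ instead of the off-diagonal ones) is precisely that argument. The sign bookkeeping and the converse verification are both handled correctly.
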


Now we discuss the solution of the inverse eigenproblems of $\left(
Z,\Lambda\right)  .$

\begin{lemma}
Given $B\in\mathbb{H}^{m\times t}$ $X\in\mathbb{H}^{s\times t}.$ Then the
inverse problem
\[
AX=B
\]
is consistent if and only if%
\[
BX^{+}X=B.
\]
In that case, the general form of $A\in\mathbb{H}^{n\times n}$ is
\[
A=BX^{+}+WR_{X}%
\]
where $W$ is an arbitrary matrix over $\mathbb{H}$ with appropriate size.
\end{lemma}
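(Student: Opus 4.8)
The plan is to reduce everything to the two defining identities of a reflexive inverse, $XX^{+}X=X$ and $X^{+}XX^{+}=X^{+}$, which are available over $\mathbb{H}$ for the fixed reflexive inverse $X^{+}$; only associativity of quaternion matrix multiplication is used, so the noncommutativity of $\mathbb{H}$ is harmless as long as the order of the factors is respected. For the solvability criterion: if $AX=B$ holds for some $A$, then multiplying $AX=B$ on the right by $X^{+}X$ and using $XX^{+}X=X$ gives $BX^{+}X=AXX^{+}X=AX=B$, which is the stated condition. Conversely, if $BX^{+}X=B$, then $A_{0}:=BX^{+}$ satisfies $A_{0}X=BX^{+}X=B$, so the equation is consistent; this simultaneously produces a particular solution $A_{0}=BX^{+}$, to be used as the base point of the affine solution set. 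One may note in passing that the criterion does not depend on which reflexive inverse is chosen, since it is equivalent to solvability.

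For the general form, assume $BX^{+}X=B$. I would first check that every $A=BX^{+}+WR_{X}$ is a solution: since $R_{X}X=(I-XX^{+})X=X-XX^{+}X=0$, one gets $AX=BX^{+}X+WR_{X}X=BX^{+}X=B$. Conversely, let $\widetilde{A}$ be any solution and set $W:=\widetilde{A}-BX^{+}$. Expanding $WR_{X}=(\widetilde{A}-BX^{+})(I-XX^{+})$ and using $\widetilde{A}X=B$ (hence $\widetilde{A}XX^{+}=BX^{+}$) together with $X^{+}XX^{+}=X^{+}$ (hence $BX^{+}XX^{+}=BX^{+}$), the four resulting terms collapse to $\widetilde{A}-BX^{+}$, so $\widetilde{A}=BX^{+}+WR_{X}$. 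Therefore the solution set is precisely the set of all $BX^{+}+WR_{X}$ with $W$ an arbitrary quaternion matrix of the size making $WR_{X}$ conformable with $A$.

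I do not anticipate a genuine obstacle: this is the quaternionic counterpart of the classical consistency theorem for linear matrix equations, and it is self-contained once the reflexive-inverse identities are granted. The only points requiring care are bookkeeping ones — keeping $B$ and $W$ on the left of their products, and matching block sizes (in the intended application $A$ is $n\times n$, $X=Z$, $B=Z\Lambda$, so $W\in\mathbb{H}^{n\times n}$ and $R_{X}=R_{Z}$). Should one wish to avoid invoking the existence of reflexive inverses over $\mathbb{H}$ directly, it can be transported from the complex case through the standard embedding $\mathbb{H}^{p\times q}\hookrightarrow\mathbb{C}^{2p\times2q}$, though this is not needed for the argument itself.
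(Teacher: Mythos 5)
Your proof is correct. The paper states this lemma without any proof, treating it as the standard consistency theorem for the linear matrix equation $AX=B$ over $\mathbb{H}$, so there is nothing to compare against; your argument is the canonical one, using only $XX^{+}X=X$ and $X^{+}XX^{+}=X^{+}$ together with associativity, and both directions (the criterion via $BX^{+}X=AXX^{+}X=AX=B$, and the parametrization via $W:=\widetilde{A}-BX^{+}$ collapsing under $R_{X}=I-XX^{+}$) check out. Your remark that the criterion is independent of the choice of reflexive inverse is a sensible addition, since the paper fixes an arbitrary reflexive inverse in its notation for $R_{X}$.
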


\begin{theorem}
Given $Z\in\mathbb{H}^{n\times m}$, $\Lambda=$diag$\left(  \lambda
_{1},...,\lambda_{m}\right)$. Let $U_{1},$ $U_{2},$ $V_{1},$ $V_{2} $ be defined as (\ref{A1}),
(\ref{A2}),
\begin{equation}
Z=\left[
\begin{array}
[c]{cc}%
V_{1}X_{1}, & V_{2}X_{2}%
\end{array}
\right]  =\left[
\begin{array}
[c]{cc}%
U_{1}Y_{1}, & U_{2}Y_{2}%
\end{array}
\right]  ,\text{ }\Lambda=\left[
\begin{array}
[c]{cc}%
\Phi & 0\\
0 & \Psi
\end{array}
\right]  ,\text{ } \label{AK1}%
\end{equation}
where $0<k<m$, $X_{1}\in\mathbb{H}^{r_{1}\times k},$ $X_{2}\in\mathbb{H}^{\left(
n-r_{1}\right)  \times\left(  m-k\right)  }$, $Y_{1}\in\mathbb{H}^{r_{2}\times
k},$ $Y_{2}\in\mathbb{H}^{\left(  n-r_{2}\right)  \times\left(  m-k\right)
},$ $\Phi=$diag$\left(  \lambda_{1},...,\lambda_{k}\right)  ,$ $\Psi
=$diag$\left(  \lambda_{k+1},...,\lambda_{m}\right)  .$ Then the set
\begin{equation}
{\large \varphi}_{1}\left(  Z,\Lambda\right)  =\left\{  A\in\mathbb{H}%
_{r}^{n\times n}\left(  P,Q\right)  \text{ }|\text{ }AZ=Z\Lambda\right\}
\label{ak1}%
\end{equation}
is nonempty if and only if
\begin{equation}
Y_{1}\Phi X_{1}X_{1}^{+}=Y_{1}\Phi,\text{ }Y_{2}\Psi X_{2}X_{2}^{+}=Y_{2}\Psi.
\label{AK2}%
\end{equation}
In that case, the general expression of $A$ is
\begin{equation}
A=\left[
\begin{array}
[c]{cc}%
U_{1}, & U_{2}%
\end{array}
\right]  \left[
\begin{array}
[c]{cc}%
Y_{1}\Phi X_{1}^{+}+W_{1}R_{X_{1}} & 0\\
0 & Y_{2}\Psi X_{2}^{+}+W_{2}R_{X_{2}}%
\end{array}
\right]  \left[
\begin{array}
[c]{c}%
V_{1}^{\ast}\\
V_{2}^{\ast}%
\end{array}
\right]  , \label{AK3}%
\end{equation}
where $W_{1},W_{2}$ are arbitrary matrices over $\mathbb{H}$ with
appropriate sizes.
\end{theorem}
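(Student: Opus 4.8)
The plan is to use the structure theorem for generalized reflexive matrices (Theorem~2.3) to turn the constrained problem ``$A\in\mathbb{H}_{r}^{n\times n}(P,Q)$ and $AZ=Z\Lambda$'' into two \emph{unconstrained} matrix equations of the type $AX=B$, each of which is then settled by the solvability lemma stated just above (Lemma~2.5).

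By Theorem~2.3, every $A\in\mathbb{H}_{r}^{n\times n}(P,Q)$ has the block form \eqref{aa2} with $A_{11}=U_{1}^{\ast}AV_{1}$ and $A_{22}=U_{2}^{\ast}AV_{2}$, and conversely every matrix of that form lies in $\mathbb{H}_{r}^{n\times n}(P,Q)$; so it suffices to decide which pairs $(A_{11},A_{22})$ make $AZ=Z\Lambda$ hold. Using the two decompositions of $Z$ in \eqref{AK1} and the orthonormality relations \eqref{A5}, \eqref{A6}, one computes
\[
AZ=[\,U_{1}A_{11}X_{1},\ U_{2}A_{22}X_{2}\,],\qquad Z\Lambda=[\,U_{1}Y_{1}\Phi,\ U_{2}Y_{2}\Psi\,]
\]
(for $AZ$, write $Z=[\,V_{1}X_{1},\,V_{2}X_{2}\,]$ and use $V^{\ast}[\,V_{1}X_{1},\,V_{2}X_{2}\,]=\mathrm{diag}(X_{1},X_{2})$, which comes from \eqref{A6}; for $Z\Lambda$, write $Z=[\,U_{1}Y_{1},\,U_{2}Y_{2}\,]$ and use the partition of $\Lambda$). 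Cancelling $U_{1}$ and $U_{2}$ on the left --- legitimate since $U_{1}^{\ast}U_{1}=I$ and $U_{2}^{\ast}U_{2}=I$ by \eqref{A5} --- shows that $AZ=Z\Lambda$ is equivalent to the \emph{decoupled} pair $A_{11}X_{1}=Y_{1}\Phi$ and $A_{22}X_{2}=Y_{2}\Psi$. Applying Lemma~2.5 to the first equation (unknown $A_{11}$, with $(X,B)=(X_{1},Y_{1}\Phi)$) and to the second (unknown $A_{22}$, with $(X,B)=(X_{2},Y_{2}\Psi)$): the first is consistent iff the first identity in \eqref{AK2} holds, with general solution $A_{11}=Y_{1}\Phi X_{1}^{+}+W_{1}R_{X_{1}}$, and likewise $A_{22}=Y_{2}\Psi X_{2}^{+}+W_{2}R_{X_{2}}$ under the second identity in \eqref{AK2}, where $W_{1},W_{2}$ are arbitrary of the appropriate sizes. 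Since $AZ=Z\Lambda$ is equivalent to the conjunction of the two equations, the set \eqref{ak1} is nonempty iff both identities in \eqref{AK2} hold; substituting the two general solutions into \eqref{aa2} then gives exactly \eqref{AK3}, and each such matrix is generalized reflexive by the converse part of Theorem~2.3.

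The one step that needs care --- more a matter of logical order than a real obstacle --- is the ``only if'' half of the emptiness criterion: here one must invoke the ``only if'' direction of Theorem~2.3, i.e.\ that \emph{every} $A\in\mathbb{H}_{r}^{n\times n}(P,Q)$ is forced into the block-diagonal shape \eqref{aa2}, so that when either identity in \eqref{AK2} fails, Lemma~2.5 rules out \emph{all} admissible $A$, not just the candidate we constructed. It is also worth recording where the hypotheses are used: the assumptions in \eqref{AK1} that the columns of $Z$ split compatibly with $(U_{1},U_{2})$ and with $(V_{1},V_{2})$, and that $\Lambda=\mathrm{diag}(\Phi,\Psi)$ is partitioned accordingly, are precisely what make $V^{\ast}Z$, and hence the whole linear system, block diagonal, so that it splits into the two independent subproblems carrying the two free parameters $W_{1},W_{2}$.
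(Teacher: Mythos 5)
Your proposal is correct and follows essentially the same route as the paper's own proof: reduce via the structure theorem (Theorem 2.3) to the decoupled equations $A_{11}X_{1}=Y_{1}\Phi$ and $A_{22}X_{2}=Y_{2}\Psi$, then apply Lemma 2.5 to each. Your version is in fact slightly more careful about the left-cancellation of $U_{1},U_{2}$ and the ``only if'' direction, but there is no substantive difference in approach.
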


\begin{proof}
Since $A\in\mathbb{H}_{r}^{n\times n}\left(  P,Q\right)  ,$ $A$ has the form
of (\ref{aa2}). By (\ref{A1}) and (\ref{A2}), substituting (\ref{aa2}) and
(\ref{AK1}) into $AZ=Z\Lambda.$ We get
\[
\left[
\begin{array}
[c]{cc}%
A_{11}X_{1} & 0\\
0 & A_{22}X_{2}%
\end{array}
\right]  =\left[
\begin{array}
[c]{cc}%
Y_{1}\Phi & 0\\
0 & Y_{2}\Psi
\end{array}
\right]  ,
\]
i.e.,
\[
A_{11}X_{1}=Y_{1}\Phi,\text{ }A_{22}X_{2}=Y_{2}\Psi.
\]
The set ${\large \varphi}\left(  Z,\Lambda\right)  $ is nonempty if and only
if the inverse problem $A_{11}X_{1}=Y_{1}$ and $A_{22}X_{2}=Y_{2}\Psi$ are
consistent. By Lemma 2.5, the invese problem $A_{11}X_{1}=Y_{1}$ and
$A_{22}X_{2}=Y_{2}\Psi$ are consistent if and only if (\ref{AK2}) holds and
their general solution are
\begin{align}
A_{11}  &  =Y_{1}\Phi X_{1}^{+}+W_{1}R_{X_{1}},\label{AK4}\\
A_{22}  &  =Y_{2}\Psi X_{2}^{+}+W_{2}R_{X_{2}},\nonumber
\end{align}
respectively. Then substituting (\ref{AK4}) into (\ref{aa2}), we get
(\ref{AK3}).

Conversly, if (\ref{AK2}) holds and $A$ has the expression of (\ref{aa2}), it
is easy to verify $A\in{\large \varphi}\left(  Z,\Lambda\right)  .$
\end{proof}

If $A\in\mathbb{H}_{r}^{n\times n}\left(  P,P\right)  ,$ i.e., $A$ is a
reflexive matrix with respect to a generalized reflection matrix $P.$ We have
the following Corollary.

\begin{corollary}
Given $Z\in\mathbb{H}^{n\times m}$, $\Lambda=$diag$\left(  \lambda
_{1},...,\lambda_{m}\right)  . $ Let $U_{1},$ $U_{2},$ $V_{1},$ $V_{2}\ $ be defined as (\ref{A1}) and
(\ref{A2}),
\[
Z=\left[
\begin{array}
[c]{cc}%
U_{1}X_{1}, & U_{2}X_{2}%
\end{array}
\right]  ,\text{ }\Lambda=\left[
\begin{array}
[c]{cc}%
\Phi & 0\\
0 & \Psi
\end{array}
\right]  ,\text{ }%
\]
where $0<k<m,$ $X_{1}\in\mathbb{H}^{r_{1}\times k},$ $X_{2}\in\mathbb{H}^{\left(
n-r_{1}\right)  \times\left(  m-k\right)  },$ $\Phi=$diag$\left(  \lambda
_{1},...,\lambda_{k}\right)  ,$ $\Psi=$diag$\left(  \lambda_{k+1}%
,...,\lambda_{m}\right)  .$ Then the set
\begin{equation}
{\large \varphi}_{1}^{^{\prime\prime}}\left(  Z,\Lambda\right)  =\left\{
A\in\mathbb{H}_{r}^{n\times n}\left(  P,P\right)  \text{ }|\text{ }%
AZ=Z\Lambda\right\}  \label{ak4}%
\end{equation}
is nonempty if and only if
\[
X_{1}\Phi X_{1}X_{1}^{+}=X_{1}\Phi,\text{ }X_{2}\Psi X_{2}X_{2}^{+}=X_{2}%
\Psi.
\]
In that case, the general expression of $A$ is
\[
A=\left[
\begin{array}
[c]{cc}%
U_{1}, & U_{2}%
\end{array}
\right]  \left[
\begin{array}
[c]{cc}%
X_{1}\Phi X_{1}^{+}+W_{1}R_{X_{1}} & 0\\
0 & X_{2}\Psi X_{2}^{+}+W_{2}R_{X_{2}}%
\end{array}
\right]  \left[
\begin{array}
[c]{c}%
U_{1}^{\ast}\\
U_{2}^{\ast}%
\end{array}
\right]  ,
\]
where $W_{1},W_{2}$ are
arbitrary matrices over $\mathbb{H}$ with appropriate sizes.
\end{corollary}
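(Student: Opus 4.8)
The plan is to obtain this corollary as the special case $Q=P$ of Theorem~2.6, after observing that the two factorizations of $Z$ required there collapse into the single hypothesis stated here. First I would note that when $Q=P$ one may use the \emph{same} unitary factor $U$ for both reflection matrices: by Lemma~2.2, $P$ factors as in (\ref{aa1}), and since $Q=P$ this is also a factorization of $Q$ of the form (\ref{ab1}), so $V_{1}=U_{1}$, $V_{2}=U_{2}$ and $r_{2}=r_{1}$. With this identification, the decomposition $Z=[\,U_{1}X_{1},\ U_{2}X_{2}\,]$ assumed in the corollary is simultaneously the decomposition $[\,V_{1}X_{1},\ V_{2}X_{2}\,]=[\,U_{1}Y_{1},\ U_{2}Y_{2}\,]$ appearing in (\ref{AK1}) of Theorem~2.6, with $Y_{1}=X_{1}$ and $Y_{2}=X_{2}$: multiplying $U_{1}X_{1}=U_{1}Y_{1}$ on the left by $U_{1}^{\ast}$ and using $U_{1}^{\ast}U_{1}=I_{r_{1}}$ from (\ref{A5}) gives $X_{1}=Y_{1}$, and similarly $X_{2}=Y_{2}$. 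Substituting $Y_{i}=X_{i}$ into the consistency conditions (\ref{AK2}) and into the general formula (\ref{AK3}) yields precisely the statement of the corollary.

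For a self-contained argument I would instead replay the proof of Theorem~2.6 in the present setting. Since $A\in\mathbb{H}_{r}^{n\times n}(P,P)$, Theorem~2.3 (applied with $V=U$) puts $A$ in the block-diagonal form (\ref{aa2}), now with $A_{11}=U_{1}^{\ast}AU_{1}$ and $A_{22}=U_{2}^{\ast}AU_{2}$ as in (\ref{AS1}). Substituting this together with $Z=[\,U_{1}X_{1},\ U_{2}X_{2}\,]$ and the block form of $\Lambda$ from (\ref{AK1}) into $AZ=Z\Lambda$, and using the orthonormality relations (\ref{A5}) to simplify the resulting product, one finds that the eigen-equation decouples into the two independent linear systems $A_{11}X_{1}=X_{1}\Phi$ and $A_{22}X_{2}=X_{2}\Psi$. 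Lemma~2.5 then says each system is solvable precisely under the row-compatibility condition displayed in the corollary, with general solutions $A_{11}=X_{1}\Phi X_{1}^{+}+W_{1}R_{X_{1}}$ and $A_{22}=X_{2}\Psi X_{2}^{+}+W_{2}R_{X_{2}}$; re-assembling $A$ via (\ref{aa2}) gives the displayed expression. The converse, that every $A$ of this shape lies in $\varphi_{1}^{\prime\prime}(Z,\Lambda)$, is the same routine verification as in Theorem~2.6.

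There is essentially no difficult step; the only points needing a moment's care are (a) justifying that one unitary $U$ can serve for $P$ as both the left and the right reflection matrix, which is immediate once $P=Q$, and (b) noting that the orthonormality relations (\ref{A5})--(\ref{A6}) make the identification $X_{i}=Y_{i}$ exact rather than only up to column spaces. Modulo these observations, the corollary is pure substitution into Theorem~2.6.
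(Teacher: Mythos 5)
Your proposal is correct and matches the paper's (implicit) argument: the corollary is stated immediately after Theorem 2.6 as its specialization to $Q=P$, which is exactly your route of taking $V=U$, $r_{2}=r_{1}$, and identifying $Y_{i}=X_{i}$ via the orthonormality of the columns of $U_{1}$, $U_{2}$. Your optional self-contained replay simply repeats the proof of Theorem 2.6 with $V=U$, so nothing further is needed.
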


\begin{theorem}
Given $Z\in\mathbb{H}^{n\times m}$, $\Lambda=$diag$\left(  \lambda
_{1},...,\lambda_{m}\right)$. Let $U_{1},$ $U_{2},$ $V_{1},$ $V_{2} $ be defined as (\ref{A1}),
(\ref{A2}),
\[
Z=\left[
\begin{array}
[c]{cc}%
V_{2}X_{1}, & V_{1}X_{2}%
\end{array}
\right] =\left[
\begin{array}
[c]{cc}%
U_{1}Y_{1}, & U_{2}Y_{2}%
\end{array}
\right]  ,\text{ }\Lambda=\left[
\begin{array}
[c]{cc}%
\Phi & 0\\
0 & \Psi
\end{array}
\right]  ,\text{ }%
\]
where $0<l<m$, $X_{1}\in\mathbb{H}^{\left(  n-r_{2}\right)  \times l},$ $X_{2}%
\in\mathbb{H}^{r_{2}\times\left(  m-l\right)  },$ $Y_{1}\in\mathbb{H}%
^{r_{2}\times l},$ $Y_{2}\in\mathbb{H}^{\left(  n-r_{2}\right)  \times\left(
m-l\right)  },$ $\Phi=$diag$\left(  \lambda_{1},...,\lambda_{l}\right)  ,$
$\Psi=$diag$\left(  \lambda_{l+1},...,\lambda_{m}\right)  .$ Then the set
\begin{equation}
{\large \varphi}_{2}\left(  Z,\Lambda\right)  =\left\{  A\in\mathbb{H}%
_{a}^{n\times n}\left(  P,Q\right)  \text{ }|\text{ }AZ=Z\Lambda\right\}
\label{AG}%
\end{equation}
${\large \varphi}\left(  Z,\Lambda\right)  =\left\{  A\in\mathbb{H}%
_{a}^{n\times n}\left(  P,Q\right)  \text{ }|\text{ }AZ=Z\Lambda\right\}  $ is
nonempty if and only if
\[
Y_{1}\Phi X_{1}X_{1}^{+}=Y_{1}\Phi,\text{ }Y_{2}\Psi X_{2}X_{2}^{+}=Y_{2}%
\Psi.
\]
In that case, the general expression of $A$ is
\[
A=\left[
\begin{array}
[c]{cc}%
U_{1}, & U_{2}%
\end{array}
\right]  \left[
\begin{array}
[c]{cc}%
0 & Y_{1}\Phi X_{1}^{+}+W_{1}R_{X_{1}}\\
Y_{2}\Psi X_{2}^{+}+W_{2}R_{X_{2}} & 0
\end{array}
\right]  \left[
\begin{array}
[c]{c}%
V_{1}^{\ast}\\
V_{2}^{\ast}%
\end{array}
\right]  ,
\]
where $W_{1},W_{2}$ are arbitrary matrices over $\mathbb{H}$ with
appropriate sizes.
\end{theorem}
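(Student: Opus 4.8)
The plan is to mirror the proof of Theorem~2.3 exactly, substituting the antireflexive structure (Theorem~2.4) for the reflexive one. First I would observe that since $A\in\mathbb{H}_{a}^{n\times n}(P,Q)$, Theorem~2.4 gives the block form
\[
A=\left[U_{1},U_{2}\right]\left[
\begin{array}{cc}
0 & A_{12}\\
A_{21} & 0
\end{array}\right]\left[
\begin{array}{c}
V_{1}^{\ast}\\
V_{2}^{\ast}
\end{array}\right],\qquad A_{12}=U_{1}^{\ast}AV_{2},\ A_{21}=U_{2}^{\ast}AV_{1}.
\]
Next I would substitute this together with the two given decompositions of $Z$ (namely $Z=[V_{2}X_{1},\,V_{1}X_{2}]$ for the right factor action and $Z=[U_{1}Y_{1},\,U_{2}Y_{2}]$ for the left factor action) and the block form of $\Lambda$ into $AZ=Z\Lambda$. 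Using the orthonormality relations (\ref{A5}) and (\ref{A6}) — in particular $V_{1}^{\ast}V_{2}=0$, $U_{i}^{\ast}U_{i}=I$ — the product $AZ$ collapses: the $(0,A_{12})$ row hits $V_{2}X_{1}$ to give $U_{1}A_{12}X_{1}$ and annihilates $V_{1}X_{2}$, while the $(A_{21},0)$ row hits $V_{1}X_{2}$ to give $U_{2}A_{21}X_{2}$. Matching against $Z\Lambda=[U_{1}Y_{1}\Phi,\,U_{2}Y_{2}\Psi]$ and peeling off $U_{1},U_{2}$ yields the decoupled pair of linear matrix equations $A_{12}X_{1}=Y_{1}\Phi$ and $A_{21}X_{2}=Y_{2}\Psi$.

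From here the argument is a direct application of Lemma~2.5 to each equation separately. The equation $A_{12}X_{1}=Y_{1}\Phi$ is consistent if and only if $Y_{1}\Phi X_{1}^{+}X_{1}=Y_{1}\Phi$, with general solution $A_{12}=Y_{1}\Phi X_{1}^{+}+W_{1}R_{X_{1}}$; similarly $A_{21}X_{2}=Y_{2}\Psi$ is consistent if and only if $Y_{2}\Psi X_{2}^{+}X_{2}=Y_{2}\Psi$, with general solution $A_{21}=Y_{2}\Psi X_{2}^{+}+W_{2}R_{X_{2}}$. Since ${\large\varphi}_{2}(Z,\Lambda)$ is nonempty exactly when both equations are solvable, this gives the stated consistency condition, and plugging the two general solutions back into the block form of $A$ produces the claimed expression for $A$. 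The converse — that any $A$ of that form with the consistency condition holding actually lies in ${\large\varphi}_{2}(Z,\Lambda)$ — follows since the block form automatically guarantees $A\in\mathbb{H}_{a}^{n\times n}(P,Q)$ (by Theorem~2.4) and a short computation recovers $AZ=Z\Lambda$ using $R_{X_{i}}X_{i}=0$.

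I do not anticipate a genuine obstacle; this is a routine adaptation. The only points requiring care are bookkeeping ones: checking that the dimensions of $X_{1},X_{2},Y_{1},Y_{2}$ declared in the statement are consistent with the off-diagonal block sizes (so that $A_{12}X_{1}$ and $A_{21}X_{2}$ are well-formed and sit in the right spaces), and making sure that the quaternionic reflexive inverses $X_{1}^{+},X_{2}^{+}$ are used in a way Lemma~2.5 permits — i.e., the lemma is applied over $\mathbb{H}$ with no appeal to commutativity. Since Lemma~2.5 is already stated over $\mathbb{H}$ and the orthogonality manipulations only use associativity, the proof goes through verbatim; I would simply write "The proof is analogous to that of Theorem~2.6, using Theorem~2.4 in place of Theorem~2.3" and, if desired, record the two intermediate equations $A_{12}X_{1}=Y_{1}\Phi$, $A_{21}X_{2}=Y_{2}\Psi$ explicitly for the reader.
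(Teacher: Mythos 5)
Your proposal is correct and is exactly the argument the paper intends: the paper's own proof of this theorem is simply ``Similar to the proof of Theorem 2.6,'' and you have carried out that adaptation faithfully, using the block form from Theorem 2.4, reducing $AZ=Z\Lambda$ to $A_{12}X_{1}=Y_{1}\Phi$ and $A_{21}X_{2}=Y_{2}\Psi$ via the orthogonality relations, and invoking Lemma 2.5. The only discrepancy is immaterial: you state the consistency condition as $Y_{i}(\cdot)X_{i}^{+}X_{i}=Y_{i}(\cdot)$, which is the form Lemma 2.5 actually gives, whereas the theorem as typeset writes $X_{i}X_{i}^{+}$ on the right (an order swap inherited from Theorem 2.6).
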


\begin{proof}
Similar to the proof of the Theorem 2.6.
\end{proof}

If $A\in\mathbb{H}_{a}^{n\times n}\left(  P,P\right)  ,$ i.e., $A$ is a
antireflexive matrix with respect to a generalized reflection matrix $P.$ We
have the following Corollary.

\begin{corollary}
Given $Z\in\mathbb{H}^{n\times m}$, $\Lambda=$diag$\left(  \lambda
_{1},...,\lambda_{m}\right)  .$ Let $U_{1},$ $U_{2}\ $ be defined as (\ref{A1}),
\[
Z=\left[
\begin{array}
[c]{cc}%
U_{1}X_{1}, & U_{2}X_{2}%
\end{array}
\right]  ,\text{ }\Lambda=\left[
\begin{array}
[c]{cc}%
\Phi & 0\\
0 & \Psi
\end{array}
\right]  ,\text{ }%
\]
where $0<l<m,$ $X_{1}\in\mathbb{H}^{\left(  n-r_{1}\right)  \times l},$ $X_{2}%
\in\mathbb{H}^{r_{1}\times\left(  m-l\right)  },$ $\Phi=$diag$\left(
\lambda_{1},...,\lambda_{l}\right)  ,$ $\Psi=$diag$\left(  \lambda
_{l+1},...,\lambda_{m}\right)  .$ Then the set
\begin{equation}
{\large \varphi}_{2}^{^{\prime\prime}}\left(  Z,\Lambda\right)  =\left\{
A\in\mathbb{H}_{a}^{n\times n}\left(  P,P\right)  \text{ }|\text{ }%
AZ=Z\Lambda\right\}  \label{AP}%
\end{equation}
is nonempty if and only if
\[
X_{1}\Phi X_{1}X_{1}^{+}=X_{1}\Phi,\text{ }X_{2}\Psi X_{2}X_{2}^{+}=X_{2}%
\Psi.
\]
In that case, the general expression of $A$ is
\[
A=\left[
\begin{array}
[c]{cc}%
U_{1}, & U_{2}%
\end{array}
\right]  \left[
\begin{array}
[c]{cc}%
0 & X_{1}\Phi X_{1}^{+}+W_{1}R_{X_{1}}\\
X_{2}\Psi X_{2}^{+}+W_{2}R_{X_{2}} & 0
\end{array}
\right]  \left[
\begin{array}
[c]{c}%
U_{1}^{\ast}\\
U_{2}^{\ast}%
\end{array}
\right]  ,
\]
where $W_{1},W_{2}$ are
arbitrary matrices over $\mathbb{H}$ with appropriate sizes.
\end{corollary}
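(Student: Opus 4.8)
The plan is to run, for $Q=P$, the same argument already used for Theorem~2.8 (which itself parallels the proof of Theorem~2.6), now with the structure theorem for antireflexive matrices playing the role that Theorem~2.3 played there. Concretely: applying Lemma~2.2 to the single reflection matrix $P$ produces a unitary $U=[U_1,U_2]$ with $U_1^{\ast}U_1=I_{r_1}$, $U_2^{\ast}U_2=I_{n-r_1}$, $U_1^{\ast}U_2=0$ and $P=U_1U_1^{\ast}-U_2U_2^{\ast}$. By Theorem~2.4 (taking the column factor equal to $U$), a matrix lies in $\mathbb{H}_a^{n\times n}(P,P)$ exactly when it is block-antidiagonal in the $U$-basis, i.e. $A=U_1A_{12}U_2^{\ast}+U_2A_{21}U_1^{\ast}$ with $A_{12}=U_1^{\ast}AU_2$, $A_{21}=U_2^{\ast}AU_1$ free of the indicated sizes. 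I would take this as the parametrization of the unknown $A$.

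Next I would substitute this $A$, the block form of $Z$, and $\Lambda=\mathrm{diag}(\Phi,\Psi)$ into $AZ=Z\Lambda$ and pre-multiply by $U^{\ast}$. Since $U_1^{\ast}U_2=0$ and $U_i^{\ast}U_i=I$, all cross terms drop out and what remains separates into two linear matrix equations of the type handled by Lemma~2.5: one whose unknown is $A_{12}$ with right-hand data assembled from $X_1$ and $\Phi$, and one whose unknown is $A_{21}$ with right-hand data assembled from $X_2$ and $\Psi$. Crucially, neither equation contains the other unknown, so the two may be solved independently. This is the step where the accounting differs from the generalized-reflexive case: because an antireflexive matrix interchanges the $\pm1$ eigenspaces of $P$, the free blocks of $A$ occupy the off-diagonal positions, and the $X_1$/$X_2$ blocks of $Z$ are matched to $\Phi$/$\Psi$ in the ``crossed'' way rather than the ``parallel'' way of Corollary~2.7.

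Then I would apply Lemma~2.5 to each of these two equations. It yields at once: the $A_{12}$-equation is consistent iff $X_1\Phi X_1X_1^{+}=X_1\Phi$, with general solution $A_{12}=X_1\Phi X_1^{+}+W_1R_{X_1}$ ($W_1$ arbitrary); the $A_{21}$-equation is consistent iff $X_2\Psi X_2X_2^{+}=X_2\Psi$, with general solution $A_{21}=X_2\Psi X_2^{+}+W_2R_{X_2}$ ($W_2$ arbitrary). Since $A_{12}$ and $A_{21}$ are the only free parameters of a matrix in $\mathbb{H}_a^{n\times n}(P,P)$ by Theorem~2.4, the set $\varphi_2^{\prime\prime}(Z,\Lambda)$ is nonempty precisely when both consistency conditions hold, and substituting the two general solutions back into $A=U_1A_{12}U_2^{\ast}+U_2A_{21}U_1^{\ast}$ gives the stated closed form. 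The converse is then an immediate check: any $A$ of that form is block-antidiagonal in the $U$-basis, hence in $\mathbb{H}_a^{n\times n}(P,P)$ by Theorem~2.4, and a direct multiplication using the orthonormality relations together with the two consistency identities confirms $AZ=Z\Lambda$.

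The only step that genuinely needs care, and the one I would write out in full, is the reduction in the middle paragraph: one has to keep track of the sizes of the sub-blocks of $Z$ and of $A$ and verify that, with $A$ block-antidiagonal and $\Lambda$ block-diagonal, the equation $AZ=Z\Lambda$ really does split into two Lemma~2.5 problems of exactly the shapes that produce the conditions $X_1\Phi X_1X_1^{+}=X_1\Phi$ and $X_2\Psi X_2X_2^{+}=X_2\Psi$. Everything after that is the routine algebra already carried out in the proof of Theorem~2.6, and the result is just the $Q=P$ instance of Theorem~2.8.
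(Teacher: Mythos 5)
The step you yourself single out as the one ``that genuinely needs care'' --- the reduction of $AZ=Z\Lambda$ to two decoupled Lemma~2.5 problems --- is exactly where the argument breaks down, and writing it out shows the claimed pairing of unknowns with data is wrong. Take $A=U_1A_{12}U_2^{\ast}+U_2A_{21}U_1^{\ast}$ as you propose and pre-multiply $AZ=Z\Lambda$ by $U^{\ast}$. Under the corollary's hypothesis each column block of $Z$ lies in a single eigenspace of $P$, so $U^{\ast}Z=\mathrm{diag}(X_1,X_2)$ is block-diagonal while $U^{\ast}AU$ is block-antidiagonal; hence $(U^{\ast}AU)(U^{\ast}Z)$ is again block-antidiagonal with entries $A_{12}X_2$ and $A_{21}X_1$, whereas $U^{\ast}Z\Lambda=\mathrm{diag}(X_1\Phi,X_2\Psi)$ is block-diagonal. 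Equating the two therefore does \emph{not} yield $A_{12}X_1=X_1\Phi$ and $A_{21}X_2=X_2\Psi$; it yields $A_{12}X_2=0$, $A_{21}X_1=0$ together with the degeneracy conditions $X_1\Phi=0$ and $X_2\Psi=0$. The two Lemma~2.5 problems you invoke, and with them the consistency conditions $X_1\Phi X_1X_1^{+}=X_1\Phi$ and $X_2\Psi X_2X_2^{+}=X_2\Psi$, never arise. This reflects a genuine feature of the antireflexive $P=Q$ case: if $A=-PAP$, $Ax=x\lambda$ and $Px=\pm x$, then $x\lambda=-PAPx=\mp Px\lambda=-x\lambda$, so $\lambda=0$; eigenvectors for nonzero eigenvalues cannot lie in a single eigenspace of $P$, which is precisely what $Z=[U_1X_1,\,U_2X_2]$ forces.

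Your fallback, that ``the result is just the $Q=P$ instance of Theorem~2.8,'' also fails: Theorem~2.8 assumes $Z=[V_2X_1,\,V_1X_2]=[U_1Y_1,\,U_2Y_2]$, and with $V=U$ the first column block satisfies $U_2X_1=U_1Y_1$, whence $Y_1=U_1^{\ast}U_2X_1=0$ and $X_1=U_2^{\ast}U_1Y_1=0$ by orthogonality --- the specialization is vacuous. (You would also have hit dimension problems on writing the reduction out: $U_1$ is $n\times r_1$ but $X_1$ is declared to be of size $(n-r_1)\times l$, so $U_1X_1$ does not parse, and $X_1\Phi X_1^{+}$ is square of order $n-r_1$ rather than of the size of an off-diagonal block of $A$.) These defects originate in the statement of the corollary itself --- the paper supplies no proof to compare against --- but a correct treatment of the antireflexive $P=Q$ case must let the columns of $Z$ mix the two eigenspaces of $P$, in which case $A_{12}$ and $A_{21}$ appear coupled in both resulting equations and the problem is no longer two independent applications of Lemma~2.5.
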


\begin{remark}
We find it is not necessary that $P$ and $Q$ are hermitian involutionary
matrices in the process of dealing with inverse eigenproblems. So we can only
consider that $P$ and $Q$ are involutionary matrices and obtain some
corresponding conclusions. Furthermore, when $P=Q$ are only involutionary
matrix, we can get some results of inverse eigenproblems of \cite{Trench2}.
\end{remark}

\section{The approximation problem to the solution to the inverse
eigenproblems}

In this section, we consider the approximation problem: $\underset
{A\in{\large \varphi}}{\min\left\Vert A-E\right\Vert _{F}},$ where
$E\in\mathbb{H}^{m\times n}$ is a given matrix over $\mathbb{H}$\ and
${\large \varphi}$ is either $\mathbb{H}_{r}^{n\times n}\left(  P,Q\right)  $
or $\mathbb{H}_{a}^{n\times n}\left(  P,Q\right)  .$

\begin{lemma}
Suppose that $E\in\mathbb{H}^{m\times n},$ $\Gamma\in\mathbb{H}^{m\times m}$
and $\digamma\in\mathbb{H}^{n\times n}$ where $\Gamma^{2}=\Gamma=\Gamma^{\ast
}$ and $\digamma^{2}=\digamma=\digamma^{\ast}.$ Then the nearness problem
$\underset{X\in\mathbb{H}^{m\times n}}{\min}\left\Vert \Gamma X\digamma
-E\right\Vert _{F}$ is consistent if and only if
\[
\Gamma(X-E)\digamma=0.
\]
In that case,
\[
\underset{X\in\mathbb{H}^{m\times n}}{\min}\left\Vert \Gamma X\digamma
-E\right\Vert_{F}=\left\Vert \Gamma E\digamma-E\right\Vert _{F}%
\]

\end{lemma}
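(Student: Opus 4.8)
The plan is to turn the minimization into a Pythagorean (orthogonal decomposition) argument on the real inner product space $\mathbb{H}^{m\times n}$. First I would record the ambient structure and two elementary facts about the quaternion trace. The bracket $\langle\!\langle A,B\rangle\!\rangle:=\mathrm{Re}\,\mathrm{trace}(B^{\ast}A)$ is a real inner product on $\mathbb{H}^{m\times n}$ whose induced norm is exactly $\|\cdot\|_{F}$, since $\langle\!\langle A,A\rangle\!\rangle=\mathrm{trace}(A^{\ast}A)=\|A\|_{F}^{2}$, and it is symmetric because $\mathrm{trace}(A^{\ast}B)=\overline{\mathrm{trace}(B^{\ast}A)}$. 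The key point, which must be handled with care because $\mathbb{H}$ is noncommutative, is that although $\mathrm{trace}(MN)\neq\mathrm{trace}(NM)$ in general over $\mathbb{H}$, one always has $\mathrm{Re}\,\mathrm{trace}(MN)=\mathrm{Re}\,\mathrm{trace}(NM)$ (this reduces entrywise to $\mathrm{Re}(pq)=\mathrm{Re}(qp)$ for $p,q\in\mathbb{H}$). Finally, the hypotheses $\Gamma^{2}=\Gamma=\Gamma^{\ast}$ and $\digamma^{2}=\digamma=\digamma^{\ast}$ say precisely that $\Gamma$ and $\digamma$ are orthogonal projections.

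Next I would split, for an arbitrary $X\in\mathbb{H}^{m\times n}$,
\[
\Gamma X\digamma-E=\Gamma(X-E)\digamma+\bigl(\Gamma E\digamma-E\bigr),
\]
and show the two summands are orthogonal for $\langle\!\langle\cdot,\cdot\rangle\!\rangle$. The cross term is $\mathrm{Re}\,\mathrm{trace}\bigl((\Gamma E\digamma-E)^{\ast}\,\Gamma(X-E)\digamma\bigr)$; expanding $(\Gamma E\digamma-E)^{\ast}=\digamma E^{\ast}\Gamma-E^{\ast}$ and using $\Gamma^{2}=\Gamma$ collapses $(\digamma E^{\ast}\Gamma-E^{\ast})\Gamma$ to $(\digamma-I)E^{\ast}\Gamma$, so the cross term equals $\mathrm{Re}\,\mathrm{trace}\bigl((\digamma-I)E^{\ast}\Gamma(X-E)\digamma\bigr)$. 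Moving $\digamma$ to the front by the real-trace cyclicity noted above turns this into $\mathrm{Re}\,\mathrm{trace}\bigl(\digamma(\digamma-I)E^{\ast}\Gamma(X-E)\bigr)$, and $\digamma(\digamma-I)=\digamma^{2}-\digamma=0$ kills it. Hence the Pythagorean identity
\[
\|\Gamma X\digamma-E\|_{F}^{2}=\|\Gamma(X-E)\digamma\|_{F}^{2}+\|\Gamma E\digamma-E\|_{F}^{2}
\]
holds for every $X$.

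From here the conclusion is immediate: the right-hand side is bounded below by $\|\Gamma E\digamma-E\|_{F}^{2}$, the bound is attained (for instance at $X=E$, and more generally exactly when $\Gamma(X-E)\digamma=0$), and a given $X$ attains it if and only if $\|\Gamma(X-E)\digamma\|_{F}=0$, i.e.\ $\Gamma(X-E)\digamma=0$. This yields simultaneously the stated characterization of the optimal $X$ and the optimal value $\min_{X}\|\Gamma X\digamma-E\|_{F}=\|\Gamma E\digamma-E\|_{F}$.

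The one genuinely delicate point, and the step I would be most careful with, is the cross-term computation over $\mathbb{H}$: since the ordinary trace is not cyclic, the whole argument must be run with $\mathrm{Re}\,\mathrm{trace}$ (equivalently with the real inner product), and the reductions must be arranged so that the idempotence relations $\Gamma^{2}=\Gamma$ and $\digamma^{2}=\digamma$ are applied before any cyclic move is invoked. Everything else is routine.
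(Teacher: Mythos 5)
Your proof is correct, and it uses the same decomposition $\Gamma X\digamma-E=\Gamma(X-E)\digamma+(\Gamma E\digamma-E)$ and the same Pythagorean strategy as the paper; the difference lies entirely in how the cross term is killed, and there your route is genuinely different and, in my view, cleaner. The paper forms $G=(\digamma-I)E^{\ast}\Gamma(X-E)\digamma$ and argues that $G$ has no nonzero right eigenvalues (if $Gx=x\lambda$ with $\lambda\neq0$, then $\digamma Gx=0$ forces $\digamma x=0$, hence $Gx=0$, hence $x=0$), so that $G$ is nilpotent and "taking traces" in the expansion makes $G+G^{\ast}$ contribute nothing. That argument works but leans on a nontrivial spectral fact (that a quaternion matrix with only zero right eigenvalues has vanishing real trace) which is left implicit, and the written version contains slips (the line "Hence, $G$ has nonzero right eigenvalues" should read "no nonzero right eigenvalues," and the displayed Pythagorean identity has $\left\Vert \Gamma X\digamma-E\right\Vert _{F}^{2}$ where $\left\Vert \Gamma E\digamma-E\right\Vert _{F}^{2}$ is meant). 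Your alternative --- working throughout with the real inner product $\mathrm{Re}\,\mathrm{trace}(B^{\ast}A)$ and using the elementary identity $\mathrm{Re}\,\mathrm{trace}(MN)=\mathrm{Re}\,\mathrm{trace}(NM)$, which reduces to $\mathrm{Re}(pq)=\mathrm{Re}(qp)$ for quaternions, to move $\digamma$ around the trace and invoke $\digamma(\digamma-I)=0$ --- avoids the spectral detour entirely and correctly flags the one place where noncommutativity matters. Both arguments arrive at the same identity and the same conclusion (minimum $\left\Vert \Gamma E\digamma-E\right\Vert _{F}$, attained exactly when $\Gamma(X-E)\digamma=0$); yours buys a more elementary and more robust justification of the orthogonality, at the cost of introducing the real inner product explicitly, which the paper only uses implicitly.
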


\begin{proof}
Note that
\[
\Gamma X\digamma-E=(\Gamma E\digamma-E)+\Gamma(X-E)\digamma.
\]
Then
\begin{equation}
(\Gamma X\digamma-E)^{\ast}\left(  \Gamma X\digamma-E\right)  =(\Gamma
E\digamma-E)^{\ast}(\Gamma E\digamma-E)+\left[  \Gamma(X-E)\digamma\right]
^{\ast}\Gamma(X-E)\digamma+G+G^{\ast} \label{w1}%
\end{equation}
where
\begin{align*}
G   =(\Gamma E\digamma-E)^{\ast}\Gamma(X-E)\digamma& =(\digamma E^{\ast}%
\Gamma-E^{\ast})\Gamma(X-E)\digamma\\
&  =\left(  \digamma-I\right)  E^{\ast}\Gamma(X-E)\digamma
\end{align*}
Suppose $\left(  \lambda,x\right)  $ is a right eigenpair for the matrix
$G\in\mathbb{H}^{n\times n},$ i.e., $Gx=x\lambda$ with $\lambda\neq0$. Since
$\digamma^{2}=\digamma,$ i.e., $\digamma(\digamma-I)=0,$ we obtain
\[
\digamma Gx=\digamma\left(  \digamma-I\right)  E^{\ast}\Gamma(X-E)\digamma
x=\digamma x\lambda=0
\]
i.e. $\digamma x=0.$ yielding $x=0.$ Hence, $G$ has nonzero right eigenvalues.
Consequently, taking traces in (\ref{w1}) yields
\[
\left\Vert \Gamma X\digamma-E\right\Vert _{F}^{2}=\left\Vert \Gamma
X\digamma-E\right\Vert _{F}^{2}+\left\Vert \Gamma(X-E)\digamma\right\Vert
_{F}^{2}.
\]
$\underset{X\in\mathbb{H}^{m\times n}}{\min}\left\Vert \Gamma X\digamma
-E\right\Vert _{F}$ is equivalent to $\underset{X\in\mathbb{H}^{m\times n}%
}{\min}\left\Vert \Gamma(X-E)\digamma\right\Vert _{F}.$

Clearly, when
$X=E+L_{\Gamma}YR_{\digamma},$\,\,\,\, $\underset{X\in\mathbb{H}^{m\times n}}%
{\min}\left\Vert \Gamma(X-E)\digamma\right\Vert _{F}=0$ where $Y$ is an
arbitrary matrix over $\mathbb{H}$ with appropriate size$.$ Hence
\[
\underset{X\in\mathbb{H}^{m\times n}}{\min}\left\Vert \Gamma X\digamma
-E\right\Vert_{F}=\left\Vert \Gamma E\digamma-E\right\Vert _{F}.
\]

\end{proof}

\begin{theorem}
Given a matrix $E\in\mathbb{H}^{n\times n}$;
\begin{equation}
\left[
\begin{array}
[c]{c}%
U_{1}^{\ast}\\
U_{2}^{\ast}%
\end{array}
\right]  E\left[
\begin{array}
[c]{cc}%
V_{1}, & V_{2}%
\end{array}
\right]  =\left[
\begin{array}
[c]{cc}%
E_{11} & E_{12}\\
E_{21} & E_{22}%
\end{array}
\right]  . \label{ac1}%
\end{equation}
where $U_{1},$ $U_{2},$ $V_{1},$ $V_{2}\ $are defined as (\ref{A1}) and
(\ref{A2}) and $E_{11}\in\mathbb{H}^{r_{1}\times r_{2}},$ $E_{12}\in
\mathbb{H}^{r_{1}\times\left(  n-r_{2}\right)  },$ $E_{21}\in\mathbb{H}%
^{\left(  n-r_{1}\right)  \times r_{2}},$ $E_{22}\in\mathbb{H}^{\left(
n-r_{1}\right)  \times\left(  n-r_{2}\right)  }$. Then the approximation
problem: $\underset{A\in{\large \varphi}_{1}}{\min}\left\Vert A-E\right\Vert
_{F}$ has a unique solution
\begin{equation}
A_{r}=\left[
\begin{array}
[c]{cc}%
U_{1}, & U_{2}%
\end{array}
\right]  \left[
\begin{array}
[c]{cc}%
Y_{1}\Phi X_{1}^{+}+E_{11}R_{X_{1}} & 0\\
0 & Y_{2}\Psi X_{2}^{+}+E_{22}R_{X_{2}}%
\end{array}
\right]  \left[
\begin{array}
[c]{c}%
V_{1}^{\ast}\\
V_{2}^{\ast}%
\end{array}
\right]  \label{c2}%
\end{equation}
where ${\large \varphi}_{1}$ are defined as (\ref{ak1}).
\end{theorem}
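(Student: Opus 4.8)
The plan is to reduce the constrained approximation problem to two independent, unconstrained nearness problems by passing to the unitary coordinates used in the structure theorem (Theorem~2.3) and in Theorem~2.6, and then to apply Lemma~3.1. Throughout I assume, as the statement implicitly does, that ${\large \varphi}_{1}$ is nonempty, i.e.\ the consistency conditions of Theorem~2.6 hold.

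Since $\left[U_{1},U_{2}\right]$ and $\left[V_{1},V_{2}\right]$ are unitary by (\ref{A5})--(\ref{A6}), conjugation by them leaves the Frobenius norm invariant. Writing a general $A\in{\large \varphi}_{1}$ in the block form (\ref{AK3}) and $E$ in the block partition (\ref{ac1}), one obtains
\[
\left\Vert A-E\right\Vert _{F}^{2}=\left\Vert Y_{1}\Phi X_{1}^{+}+W_{1}R_{X_{1}}-E_{11}\right\Vert _{F}^{2}+\left\Vert E_{12}\right\Vert _{F}^{2}+\left\Vert E_{21}\right\Vert _{F}^{2}+\left\Vert Y_{2}\Psi X_{2}^{+}+W_{2}R_{X_{2}}-E_{22}\right\Vert _{F}^{2}.
\]
The terms $\left\Vert E_{12}\right\Vert _{F}$ and $\left\Vert E_{21}\right\Vert _{F}$ are constants, so the minimization decouples into minimizing $\left\Vert W_{1}R_{X_{1}}-(E_{11}-Y_{1}\Phi X_{1}^{+})\right\Vert _{F}$ over $W_{1}$ and, separately, $\left\Vert W_{2}R_{X_{2}}-(E_{22}-Y_{2}\Psi X_{2}^{+})\right\Vert _{F}$ over $W_{2}$. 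Each is an instance of the nearness problem of Lemma~3.1 with $\Gamma=I$ and $\digamma=R_{X_{1}}$ (respectively $R_{X_{2}}$). Lemma~3.1 then gives that the first minimum is attained exactly when $W_{1}R_{X_{1}}=(E_{11}-Y_{1}\Phi X_{1}^{+})R_{X_{1}}$; using the reflexive–inverse relation $X_{1}^{+}X_{1}X_{1}^{+}=X_{1}^{+}$ one has $X_{1}^{+}R_{X_{1}}=X_{1}^{+}-X_{1}^{+}X_{1}X_{1}^{+}=0$, hence $(E_{11}-Y_{1}\Phi X_{1}^{+})R_{X_{1}}=E_{11}R_{X_{1}}$ and the optimal block is $A_{11}=Y_{1}\Phi X_{1}^{+}+E_{11}R_{X_{1}}$. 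The identical computation yields $A_{22}=Y_{2}\Psi X_{2}^{+}+E_{22}R_{X_{2}}$. Although $W_{1},W_{2}$ themselves are not uniquely determined, the products $W_{1}R_{X_{1}}$ and $W_{2}R_{X_{2}}$ at the minimum are, so $A_{11}$ and $A_{22}$, and hence $A$, are unique; substituting these two blocks into (\ref{aa2}) gives precisely (\ref{c2}).

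The step I expect to require the most care is the reduction to Lemma~3.1. One must make sure that $R_{X_{1}}=I-X_{1}X_{1}^{+}$ and $R_{X_{2}}=I-X_{2}X_{2}^{+}$ are genuine Hermitian projectors so that Lemma~3.1 (which demands $\digamma^{2}=\digamma=\digamma^{\ast}$) applies — which amounts to taking $X_{1}^{+},X_{2}^{+}$ to be Moore--Penrose inverses, or else proving directly from the trace inner product $\left\langle A,B\right\rangle =trace(B^{\ast}A)$ that the fitted part $Y_{1}\Phi X_{1}^{+}-E_{11}X_{1}X_{1}^{+}$ of $A_{11}-E_{11}$ is orthogonal to the free part $(W_{1}-E_{11})R_{X_{1}}$, so that $\left\Vert A_{11}-E_{11}\right\Vert _{F}^{2}$ separates into a fixed term plus a term minimized to zero by $W_{1}R_{X_{1}}=E_{11}R_{X_{1}}$. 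The remaining ingredients — unitary invariance of $\|\cdot\|_{F}$, the block bookkeeping, and the simplification $(E_{11}-Y_{1}\Phi X_{1}^{+})R_{X_{1}}=E_{11}R_{X_{1}}$ — are routine.
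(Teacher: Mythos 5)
Your proof is correct and follows essentially the same route as the paper: unitary invariance to pass to the block coordinates of Theorem 2.6, decoupling of $\left\Vert A-E\right\Vert _{F}^{2}$ into two independent nearness problems in $W_{1}$ and $W_{2}$, and an application of Lemma 3.1 with $\digamma=R_{X_{i}}$. Your added caveat that $R_{X_{i}}$ must be a Hermitian idempotent for Lemma 3.1 to apply (i.e., that $X_{i}^{+}$ should be the Moore--Penrose inverse rather than an arbitrary reflexive inverse) addresses a point the paper asserts without justification, so your version is if anything slightly more careful.
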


\begin{proof}
$A\in{\large \varphi}_{1}$, by Theorem 2.6 and (\ref{ac1}),
\begin{align*}
\left\Vert A-E\right\Vert _{F}^{2}  &
=\left\Vert \left[
\begin{array}
[c]{cc}%
Y_{1}\Phi X_{1}^{+}+W_{1}R_{X_{1}} & 0\\
0 & Y_{2}\Psi X_{2}^{+}+W_{2}R_{X_{2}}%
\end{array}
\right]  -\left[
\begin{array}
[c]{c}%
U_{1}^{\ast}\\
U_{2}^{\ast}%
\end{array}
\right]  E\left[
\begin{array}
[c]{cc}%
V_{1}, & V_{2}%
\end{array}
\right]  \right\Vert _{F}^{2}\\
&  =\left\Vert \left[
\begin{array}
[c]{cc}%
Y_{1}\Phi X_{1}^{+}+W_{1}R_{X_{1}} & 0\\
0 & Y_{2}\Psi X_{2}^{+}+W_{2}R_{X_{2}}%
\end{array}
\right]  -\left[
\begin{array}
[c]{cc}%
E_{11} & E_{12}\\
E_{21} & E_{22}%
\end{array}
\right]  \right\Vert _{F}^{2}\\
&  =\left\Vert \left[
\begin{array}
[c]{cc}%
Y_{1}\Phi X_{1}^{+}+W_{1}R_{X_{1}}-E_{11} & -E_{12}\\
-E_{21} & Y_{2}\Psi X_{2}^{+}+W_{2}R_{X_{2}}-E_{22}%
\end{array}
\right]  \right\Vert _{F}^{2}\\
&  =\left\Vert W_{1}R_{X_{1}}-\left(  E_{11}-Y_{1}\Phi X_{1}^{+}\right)
\right\Vert _{F}^{2}+\left\Vert W_{2}R_{X_{2}}-\left(  E_{22}-Y_{2}\Psi
X_{2}^{+}\right)  \right\Vert _{F}^{2}+\left\Vert E_{12}\right\Vert _{F}%
^{2}+\left\Vert E_{21}\right\Vert _{F}^{2}.
\end{align*}
Therefore, $\underset{A\in{\large \varphi}_{1}}{\min}\left\Vert A-E\right\Vert
_{F}^{2}$ is consistent is equivalent to
\begin{align}
&  \min\left\Vert W_{1}R_{X_{1}}-\left(  E_{11}-Y_{1}\Phi X_{1}^{+}\right)
\right\Vert _{F}^{2},\text{ }\label{RT1}\\
&  \min\left\Vert W_{2}R_{X_{2}}-\left(  E_{22}-Y_{2}\Psi X_{2}^{+}\right)
\right\Vert _{F}^{2} \label{RT2}%
\end{align}
are consistent. Since $R_{X_{_{i}}}^{2}=R_{X_{_{i}}}=R_{X_{_{i}}}^{\ast},$
$i=1,2,$ it foillows from Lemma 3.1 that (\ref{RT1}) is consistent if and only
if
\begin{equation}
\left[  W_{1}-\left(  E_{11}-Y_{1}\Phi X_{1}^{+}\right)  \right]  R_{X_{1}}=0
\label{AG1}%
\end{equation}
and
\begin{align*}
\min\left\Vert W_{1}R_{X_{1}}-\left(  E_{11}-Y_{1}\Phi X_{1}^{+}\right)
\right\Vert _{F}  &  =\left\Vert \left(  E_{11}-Y_{1}\Phi X_{1}^{+}\right)
-\left(  E_{11}-Y_{1}\Phi X_{1}^{+}\right)  R_{X_{1}}\right\Vert _{F}\\
&  =\left\Vert E_{11}X_{1}X_{1}^{+}-Y_{1}\Phi X_{1}^{+}\right\Vert _{F}%
\end{align*}
and (\ref{RT2}) is consistent if and only if
\begin{equation}
\left[  W_{2}-\left(  E_{22}-Y_{2}\Psi X_{2}^{+}\right)  \right]  R_{X_{2}}=0
\label{AG2}%
\end{equation}
and
\begin{align*}
\min\left\Vert W_{2}R_{X_{2}}-\left(  E_{22}-Y_{2}\Psi X_{2}^{+}\right)
\right\Vert _{F}  &  =\left\Vert \left(  E_{22}-Y_{2}\Psi X_{2}^{+}\right)
-\left(  E_{22}-Y_{2}\Psi X_{2}^{+}\right)  R_{X_{2}}\right\Vert _{F}\\
&  =\left\Vert E_{22}X_{2}X_{2}^{+}-Y_{2}\Psi X_{2}^{+}\right\Vert _{F}.
\end{align*}
It is easy to know the solutions of (\ref{AG1}) and (\ref{AG2}) are
\begin{align}
W_{1}  &  =E_{11}-Y_{1}\Phi X_{1}^{+}+T_{1}X_{1}X_{1}^{+},\label{AG3}\\
W_{2}  &  =E_{22}-Y_{2}\Psi X_{2}^{+}+T_{2}X_{2}X_{2}^{+}, \label{AG4}%
\end{align}
respectively, where $T_{1},$ $T_{2}$ are arbitary matrices with appropriate
sizes. Substituting (\ref{AG3}) and (\ref{AG4}) into (\ref{AK3}), we easily
get (\ref{c2}).
\end{proof}

\begin{corollary}
Given a matrix $E\in\mathbb{H}^{n\times n}$;
\[
\left[
\begin{array}
[c]{c}%
U_{1}^{\ast}\\
U_{2}^{\ast}%
\end{array}
\right]  E\left[
\begin{array}
[c]{cc}%
U_{1}, & U_{2}%
\end{array}
\right]  =\left[
\begin{array}
[c]{cc}%
E_{11} & E_{12}\\
E_{21} & E_{22}%
\end{array}
\right]  .
\]
where $U_{1},$ $U_{2}$ are defined as (\ref{A1}) and $E_{11}\in\mathbb{H}%
^{r_{1}\times r_{1}},$ $E_{12}\in\mathbb{H}^{r_{1}\times\left(  n-r_{1}%
\right)  },$ $E_{21}\in\mathbb{H}^{\left(  n-r_{1}\right)  \times r_{1}},$
$E_{22}\in\mathbb{H}^{\left(  n-r_{1}\right)  \times\left(  n-r_{1}\right)  }%
$. Then the approximation problem: $\underset{A\in{\large \varphi}%
_{1}^{^{\prime\prime}}}{\min}\left\Vert A-E\right\Vert _{F}$ has a unique
solution
\[
\tilde{A}_{r}=\left[
\begin{array}
[c]{cc}%
U_{1}, & U_{2}%
\end{array}
\right]  \left[
\begin{array}
[c]{cc}%
X_{1}\Phi X_{1}^{+}+E_{11}R_{X_{1}} & 0\\
0 & X_{2}\Psi X_{2}^{+}+E_{22}R_{X_{2}}%
\end{array}
\right]  \left[
\begin{array}
[c]{c}%
U_{1}^{\ast}\\
U_{2}^{\ast}%
\end{array}
\right]
\]
where ${\large \varphi}_{1}^{^{\prime\prime}}$ are defined as (\ref{ak4}).

\begin{theorem}
Given a matrix $E\in\mathbb{H}^{n\times n}$; Suppose 
\[
\left[
\begin{array}
[c]{c}%
U_{1}^{\ast}\\
U_{2}^{\ast}%
\end{array}
\right]  E\left[
\begin{array}
[c]{cc}%
V_{1}, & V_{2}%
\end{array}
\right]  =\left[
\begin{array}
[c]{cc}%
E_{11} & E_{12}\\
E_{21} & E_{22}%
\end{array}
\right]  ,
\]
where $U_{1},$ $U_{2},$ $V_{1},$ $V_{2}\ $are defined as (\ref{A1}) and
(\ref{A2}) and $E_{11}\in\mathbb{H}^{r_{1}\times r_{2}},$ $E_{12}\in
\mathbb{H}^{r_{1}\times\left(  n-r_{2}\right)  },$ $E_{21}\in\mathbb{H}%
^{\left(  n-r_{1}\right)  \times r_{2}},$ $E_{22}\in\mathbb{H}^{\left(
n-r_{1}\right)  \times\left(  n-r_{2}\right)  }$. Then the approximation
problem: $A_{a}=\arg\underset{A\in{\large \varphi}_{2}}{\min}\left\Vert A-E\right\Vert
_{F}$ has a unique solution
\[
A_{a}=\left[
\begin{array}
[c]{cc}%
U_{1}, & U_{2}%
\end{array}
\right]  \left[
\begin{array}
[c]{cc}%
0 & Y_{1}\Phi X_{1}^{+}+E_{12}R_{X_{1}}\\
Y_{2}\Psi X_{2}^{+}+E_{21}R_{X_{2}} & 0
\end{array}
\right]  \left[
\begin{array}
[c]{c}%
V_{1}^{\ast}\\
V_{2}^{\ast}%
\end{array}
\right]
\]
where ${\large \varphi}_{2}$ are defined as (\ref{AG}).
\end{theorem}
\end{corollary}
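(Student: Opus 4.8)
The plan is to run exactly the argument used in the proof of Theorem~3.2, but starting from the parametrization of the $\mathbb{H}_{a}^{n\times n}\left( P,Q\right) $-solutions supplied by Theorem~2.8 in place of the $\mathbb{H}_{r}^{n\times n}\left( P,Q\right) $ one. Throughout we assume the consistency conditions $Y_{1}\Phi X_{1}X_{1}^{+}=Y_{1}\Phi$ and $Y_{2}\Psi X_{2}X_{2}^{+}=Y_{2}\Psi$ of Theorem~2.8, so that $\varphi_{2}\left( Z,\Lambda\right) $ is nonempty and the minimization is meaningful.

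First I would set $U=\left[ U_{1},U_{2}\right] $ and $V=\left[ V_{1},V_{2}\right] $, which are unitary by (\ref{A5})--(\ref{A6}); hence $\left\Vert A-E\right\Vert _{F}=\left\Vert U^{\ast}\left( A-E\right) V\right\Vert _{F}$ for every $A$. By Theorem~2.8 an arbitrary $A\in\varphi_{2}\left( Z,\Lambda\right) $ is obtained by choosing free matrices $W_{1},W_{2}$, and $U^{\ast}AV$ is the block-off-diagonal matrix with $(1,2)$-block $Y_{1}\Phi X_{1}^{+}+W_{1}R_{X_{1}}$ and $(2,1)$-block $Y_{2}\Psi X_{2}^{+}+W_{2}R_{X_{2}}$. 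Writing $U^{\ast}EV=\left[ E_{ij}\right] $ as in the hypothesis, subtracting, and expanding the Frobenius norm block by block gives
\[
\left\Vert A-E\right\Vert _{F}^{2}=\left\Vert W_{1}R_{X_{1}}-\left( E_{12}-Y_{1}\Phi X_{1}^{+}\right) \right\Vert _{F}^{2}+\left\Vert W_{2}R_{X_{2}}-\left( E_{21}-Y_{2}\Psi X_{2}^{+}\right) \right\Vert _{F}^{2}+\left\Vert E_{11}\right\Vert _{F}^{2}+\left\Vert E_{22}\right\Vert _{F}^{2}.
\]
The point worth flagging is that, in contrast with the reflexive case of Theorem~3.2, here it is the off-diagonal blocks $E_{12},E_{21}$ that enter the minimization, while $E_{11},E_{22}$ contribute only the fixed constant $\left\Vert E_{11}\right\Vert _{F}^{2}+\left\Vert E_{22}\right\Vert _{F}^{2}$.

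The problem therefore splits into the two independent subproblems $\min_{W_{1}}\left\Vert W_{1}R_{X_{1}}-\left( E_{12}-Y_{1}\Phi X_{1}^{+}\right) \right\Vert _{F}$ and $\min_{W_{2}}\left\Vert W_{2}R_{X_{2}}-\left( E_{21}-Y_{2}\Psi X_{2}^{+}\right) \right\Vert _{F}$. Since $R_{X_{i}}^{2}=R_{X_{i}}=R_{X_{i}}^{\ast}$, Lemma~3.1, applied with $\Gamma=I$ and $\digamma=R_{X_{i}}$, shows each subproblem is consistent and its solution set is cut out by $\left[ W_{1}-\left( E_{12}-Y_{1}\Phi X_{1}^{+}\right) \right] R_{X_{1}}=0$ and $\left[ W_{2}-\left( E_{21}-Y_{2}\Psi X_{2}^{+}\right) \right] R_{X_{2}}=0$, i.e. $W_{1}=E_{12}-Y_{1}\Phi X_{1}^{+}+T_{1}X_{1}X_{1}^{+}$ and $W_{2}=E_{21}-Y_{2}\Psi X_{2}^{+}+T_{2}X_{2}X_{2}^{+}$ with $T_{1},T_{2}$ arbitrary. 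Substituting back into the expression of Theorem~2.8 and using $X_{i}^{+}R_{X_{i}}=0$ and $X_{i}X_{i}^{+}R_{X_{i}}=0$ one gets $W_{1}R_{X_{1}}=E_{12}R_{X_{1}}$ and $W_{2}R_{X_{2}}=E_{21}R_{X_{2}}$, so the minimizing matrix has $(1,2)$-block $Y_{1}\Phi X_{1}^{+}+E_{12}R_{X_{1}}$ and $(2,1)$-block $Y_{2}\Psi X_{2}^{+}+E_{21}R_{X_{2}}$, which is precisely the claimed $A_{a}$. Although $W_{1},W_{2}$ are not themselves unique, the products $W_{1}R_{X_{1}}$ and $W_{2}R_{X_{2}}$ are, so $A_{a}$ is the unique minimizer.

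I do not anticipate a genuine obstacle: the only things needing care are the bookkeeping of block sizes so that the substitution into the formula of Theorem~2.8 is legitimate, and the uniqueness claim, which rests on the observation that right multiplication by $R_{X_{i}}$ already pins down $W_{i}R_{X_{i}}$ even though $W_{i}$ is free on the complementary range. The corresponding statement for $\varphi_{2}^{\prime\prime}$, the antireflexive case with $P=Q$, follows by the same computation with $V_{j}$ replaced by $U_{j}$.
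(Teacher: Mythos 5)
Your proposal is correct and follows exactly the route the paper intends: the paper's own ``proof'' of the nested theorem is a one-line appeal to Theorem~2.8 (misquoted there as 2.7) and to the argument of Theorem~3.2, and you have filled in precisely those details --- the block-off-diagonal expansion putting $E_{12},E_{21}$ into the minimization and $E_{11},E_{22}$ into the constant, the application of Lemma~3.1 with $\digamma=R_{X_i}$, and the observation that $X_i^{+}R_{X_i}=0$ forces $W_iR_{X_i}=E_{12}R_{X_1}$ (resp.\ $E_{21}R_{X_2}$), which also settles uniqueness. The only cosmetic slip is that your closing remark specializes to $\varphi_{2}^{\prime\prime}$ rather than to the $\varphi_{1}^{\prime\prime}$ corollary that wraps the theorem in the statement, but that corollary is the identical $P=Q$ specialization of Theorem~3.2 and needs no separate argument.
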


\begin{proof}
Combing Theorem 2.7, similarily to the proof of Theorem 3.2, we easily
complete the proof the Theorem.
\end{proof}

\begin{corollary}
Given a matrix $E\in\mathbb{H}^{n\times n}$; Suppose 
\[
\left[
\begin{array}
[c]{c}%
U_{1}^{\ast}\\
U_{2}^{\ast}%
\end{array}
\right]  E\left[
\begin{array}
[c]{cc}%
U_{1}, & U_{2}%
\end{array}
\right]  =\left[
\begin{array}
[c]{cc}%
E_{11} & E_{12}\\
E_{21} & E_{22}%
\end{array}
\right]  ,
\]
where $U_{1},$ $U_{2}\ $are defined as (\ref{A1}) and (\ref{A2}) and
$E_{11}\in\mathbb{H}^{r_{1}\times r_{1}},$ $E_{12}\in\mathbb{H}^{r_{1}%
\times\left(  n-r_{1}\right)  },$ $E_{21}\in\mathbb{H}^{\left(  n-r_{1}%
\right)  \times r_{1}},$ $E_{22}\in\mathbb{H}^{\left(  n-r_{1}\right)
\times\left(  n-r_{1}\right)  }$. Then the approximation problem:
$\tilde{A}_{a}=\arg\underset{A\in{\large \varphi}_{2}^{^{\prime\prime}}}{\min}\left\Vert
A-E\right\Vert _{F}$ has a unique solution
\[
\tilde{A}_{a}=\left[
\begin{array}
[c]{cc}%
U_{1}, & U_{2}%
\end{array}
\right]  \left[
\begin{array}
[c]{cc}%
0 & X_{1}\Phi X_{1}^{+}+E_{12}R_{X_{1}}\\
X_{2}\Psi X_{2}^{+}+E_{21}R_{X_{2}} & 0
\end{array}
\right]  \left[
\begin{array}
[c]{c}%
U_{1}^{\ast}\\
U_{2}^{\ast}%
\end{array}
\right]
\]
where ${\large \varphi}_{2}^{^{\prime\prime}}$ are defined as (\ref{AP}).
\end{corollary}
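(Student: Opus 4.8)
The plan is to carry out the argument of the preceding theorem (the approximation problem over $\varphi_{2}$) in the special case $Q=P$, so that $V_{1}=U_{1}$ and $V_{2}=U_{2}$ throughout, using Corollary~2.9 to parametrize the feasible set $\varphi_{2}^{\prime\prime}$.

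First I would recall from Corollary~2.9 that every $A\in\varphi_{2}^{\prime\prime}$ can be written as
\[
A=\left[\begin{array}{cc}U_{1},&U_{2}\end{array}\right]\left[\begin{array}{cc}0&X_{1}\Phi X_{1}^{+}+W_{1}R_{X_{1}}\\X_{2}\Psi X_{2}^{+}+W_{2}R_{X_{2}}&0\end{array}\right]\left[\begin{array}{c}U_{1}^{\ast}\\U_{2}^{\ast}\end{array}\right],
\]
with $W_{1},W_{2}$ free. Since $\left[U_{1},U_{2}\right]$ is unitary over $\mathbb{H}$ and the Frobenius norm — being the norm induced by $\left\langle A,B\right\rangle=trace\left(B^{\ast}A\right)$ — is unchanged under conjugation by it, while the squared Frobenius norm of a $2\times2$ block matrix equals the sum of the squared Frobenius norms of its four blocks, conjugating $A-E$ by $\left[U_{1},U_{2}\right]$ and using the hypothesis on $E$ gives
\begin{align*}
\left\Vert A-E\right\Vert_{F}^{2}&=\left\Vert X_{1}\Phi X_{1}^{+}+W_{1}R_{X_{1}}-E_{12}\right\Vert_{F}^{2}+\left\Vert X_{2}\Psi X_{2}^{+}+W_{2}R_{X_{2}}-E_{21}\right\Vert_{F}^{2}\\
&\quad+\left\Vert E_{11}\right\Vert_{F}^{2}+\left\Vert E_{22}\right\Vert_{F}^{2},
\end{align*}
and the last two terms do not depend on $A$. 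Thus the minimization splits into two independent problems, one in $W_{1}$ and one in $W_{2}$.

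Next I would dispose of the $W_{1}$-term. Put $C_{1}=E_{12}-X_{1}\Phi X_{1}^{+}$, so this term equals $\left\Vert W_{1}R_{X_{1}}-C_{1}\right\Vert_{F}^{2}$. Since $R_{X_{1}}$ is a Hermitian idempotent, Lemma~3.1 (applied with $\Gamma=I$ and $\digamma=R_{X_{1}}$) shows the infimum is attained precisely at those $W_{1}$ with $\left(W_{1}-C_{1}\right)R_{X_{1}}=0$, i.e. with $W_{1}R_{X_{1}}=C_{1}R_{X_{1}}$. The essential point is that, even though $W_{1}$ itself is not unique, the block $W_{1}R_{X_{1}}$ is, and at the optimum
\begin{align*}
X_{1}\Phi X_{1}^{+}+W_{1}R_{X_{1}}&=X_{1}\Phi X_{1}^{+}+C_{1}R_{X_{1}}=X_{1}\Phi X_{1}^{+}+\left(E_{12}-X_{1}\Phi X_{1}^{+}\right)R_{X_{1}}\\
&=X_{1}\Phi X_{1}^{+}+E_{12}R_{X_{1}}-X_{1}\Phi X_{1}^{+}R_{X_{1}}=X_{1}\Phi X_{1}^{+}+E_{12}R_{X_{1}},
\end{align*}
the last equality using $X_{1}^{+}X_{1}X_{1}^{+}=X_{1}^{+}$, which makes $X_{1}\Phi X_{1}^{+}R_{X_{1}}=X_{1}\Phi X_{1}^{+}\left(I-X_{1}X_{1}^{+}\right)=0$. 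The identical computation with $C_{2}=E_{21}-X_{2}\Psi X_{2}^{+}$ forces the $(2,1)$ block to be $X_{2}\Psi X_{2}^{+}+E_{21}R_{X_{2}}$. Substituting these two optimal blocks back into the parametrization above produces the stated $\tilde A_{a}$; since each optimal block is uniquely determined, $\tilde A_{a}$ is the unique minimizer.

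I do not expect a genuine obstacle — the argument is word-for-word that of Theorem~3.2 and of the preceding theorem. The one subtlety worth flagging is that the optimization determines the products $W_{1}R_{X_{1}}$ and $W_{2}R_{X_{2}}$, not $W_{1}$ and $W_{2}$ individually; this is exactly why $\tilde A_{a}$ comes out uniquely, and it is also why one must simplify $C_{i}R_{X_{i}}$ via $X_{i}^{+}X_{i}X_{i}^{+}=X_{i}^{+}$ to reach the clean form in the statement. Beyond that, one only needs that unitary conjugation and the block-wise splitting of $\left\Vert\cdot\right\Vert_{F}^{2}$ behave over $\mathbb{H}$ exactly as over $\mathbb{C}$, which is immediate from the definition of the inner product.
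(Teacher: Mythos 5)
Your proof is correct and follows essentially the same route as the paper: the paper obtains this corollary by specializing the preceding theorem (itself proved "similarly to Theorem 3.2") to $P=Q$, and your argument is exactly the Theorem 3.2 template — unitary invariance of $\left\Vert\cdot\right\Vert_{F}$, block-wise splitting, and Lemma 3.1 with $\Gamma=I$, $\digamma=R_{X_{i}}$ — applied to the antireflexive parametrization of Corollary 2.9. Your explicit remarks that only the products $W_{i}R_{X_{i}}$ are determined (whence uniqueness of $\tilde A_{a}$) and that $X_{i}\Phi X_{i}^{+}R_{X_{i}}=0$ via $X_{i}^{+}X_{i}X_{i}^{+}=X_{i}^{+}$ are exactly the points the paper's computation relies on implicitly.
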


\bigskip

\section{\textbf{Conclusion}}

We in this paper used the matrix decomposition method to give the properties of the
generalized reflexive and antireflexive matrices with respect to a pair of
generalized reflection matrices over quaternion algebra. We studied the inverse eigenproblems and the approximation problems of the
generalized reflexive and antireflexive matrices with respect to a pair of
generalized reflection matrices. We also showed the explicit expressions and formulas.

\end{document}